\title{On the wavelets-based SWIFT method for backward stochastic differential equations}
\author{Ki Wai Chau and Cornelis W. Oosterlee}
\numberwithin{equation}{section}
\newtheorem{theorem}{Theorem}[section]
\theoremstyle{definition}\newtheorem{definition}[theorem]{Definition}
\theoremstyle{remark}\newtheorem{remark}[theorem]{Remark}
\begin{document}
\maketitle

\begin{abstract}
We propose a numerical algorithm for backward stochastic differential equations based on time discretization and trigonometric wavelets. This method combines the effectiveness of Fourier-based methods and the simplicity of a wavelet-based formula, resulting in an algorithm that is both accurate and easy to implement. Furthermore, we mitigate the problem of errors near the computation boundaries by means of an antireflective boundary technique, giving an improved approximation. We test our algorithm with different numerical experiments.
\end{abstract}

\section{Introduction}
Ever since the general notion of backward stochastic differential equations (BSDEs) was introduced in \cite{pardoux_peng_1990}, it has been a popular research subject. Especially in mathematical finance and insurance, BSDEs are powerful tools for the valuation of a contingent claim, both under usual complete market setting or with the incorporation of market imperfections and  collateral requirements.

Finding an analytic solution for such equations is often difficult or even impossible, either by directly solving or by transforming the problems into partial differential equations (see \cite{pardoux_peng_1992}). Therefore, numerical methods are in great demand. While the majority of so-called probabilistic methods for solving BSDEs relies on time discretization of the stochastic process, they differ by the methods for calculating the appearing conditional expectations. Techniques used include least-squares Monte Carlo regression in \cite{lemor_gobet_warin_2006}, chaos decomposition formulas in \cite{briand_labert_2014}, cubature methods in \cite{crisan_manolarakis_2012}, among others. In particular, we are interested in methods based on Fourier series. These methods link the computation of the expectations to the characteristic function of the transitional probability density function, which is either given or easy to approximate. One particular method is the BCOS method proposed in \cite{ruijter_oosterlee_2015}, which was derived from the COS method for option pricing in \cite{fang_oosterlee_2009_original_COS}. 

There have been new developments in  option pricing methods based on Fourier series. The Shannon Wavelets Inverse Fourier Technique (SWIFT method) was proposed in \cite{ortiz_gracia_oosterlee_2016} for pricing European options and a so-called {\em quick SWIFT variant} was developed in \cite{maree_ortiz-gracia_oosterlee_2016} for pricing American and barrier options. The quick SWIFT method, while also based on Shannon wavelets, has the additional benefit of simplifying the algorithm and the error formula. Moreover, it is much easier to adjust individual approximation values because wavelets form a localized basis. We propose a new approach to solving BSDEs by combining a general $\theta$-method for time-integration, as used in \cite{iserles_2008} and \cite{ruijter_oosterlee_2015}, with the SWIFT method. We also improve on previous work on SWIFT by providing an alternative derivation that takes into account the computational range.

In Section \ref{backward_stochastic_differential_equations}, the class of BSDEs under our consideration along with some notations and standing assumptions will be introduced. Section \ref{swift_method} contains the derivation of the SWIFT formula and our numerical algorithm for the BSDEs, while Section \ref{section:errors_and_computational_complexity} is related to the error and computational complexity of our algorithm. We further improve our algorithm along the computation boundary in Section \ref{section:anti-reflective_boundary}. Various numerical experiments are performed in Section \ref{section:numerical_experiments} and concluding remarks are given in Section \ref{section:conclusion}.

\section{Backward stochastic differential equations} \label{backward_stochastic_differential_equations}
\subsection{Setting}
Given a filtered complete probability space $(\Omega, \mathcal{F},\mathbb{F},\mathbb{P})$ with $\mathbb{F}:=(\mathcal{F}_t)_{0\leq t\leq T}$ a filtration satisfying the usual conditions for a fixed terminal time $T > 0$. The process $\omega:=(\omega_t)_{0\leq t\leq T}$ is a Brownian motion adapted to the filtration $\mathbb{F}$ and we are interested in solving the following one-dimension decoupled forward-backward stochastic differential equations, known as FBSDEs, numerically.   
\begin{equation}
\left\{
	\begin{array}{l}
		dX_t = \mu(t, X_t) dt +\sigma(t, X_t) d\omega_t;\\
		dY_t=-f(t,X_t, Y_t, Z_t) dt+ Z_t d\omega_t, 
	\end{array}
\right.\label{BSDE}
\end{equation}
where $0\leq t\leq T$. The functions $\mu:\Omega \times [0,T] \times \mathbb{R}\rightarrow \mathbb{R}$ and $\sigma: \Omega \times [0,T] \times \mathbb{R} \rightarrow \mathbb{R}$ refer to the drift and the diffusion coefficients of the forward stochastic process, $X$, and $x_0\in\mathcal{F}_0$ is the initial condition for $X$. The function $f:\Omega\times[0,T]\times \mathbb{R} \times \mathbb{R} \times \mathbb{R}$ is called the driver function of the backward process and the terminal condition $Y_T$ is given by $g(X_T)$ for a function $g:\Omega\times\mathbb{R}\rightarrow\mathbb{R}$. All stochastic integrals with $\omega$ are of the It\^o type.

It is assumed that both $\mu(t,x)$ and $\sigma(t,x)$ are measurable functions that are uniformly Lipschitz in $x$ and satisfy a linear growth condition in $x$. 
Therefore, there exists a unique strong solution for the forward stochastic differential equation, $$X_t = x_0 + \int^t_0 \mu(\tau, X_\tau) d\tau	+ \int^t_0\sigma(\tau, X_\tau) d\omega_\tau.$$ 
This process also satisfies the Markov property, namely $\mathbb{E}[X_\tau|\mathcal{F}_t] = \mathbb{E}[X_\tau|X_t]$ for $\tau \geq t$, where $\mathbb{E}[\cdot]$ denotes expectation with respect to probability measure $\mathbb{P}$. 

A pair of adapted processes $(Y,Z)$ is said to be the solution of the FBSDE if $Y$ is a continuous real-value adapted process, $Z$ is a real-value predictable process such that $\int^T_0|Z_t|^2dt < \infty$ almost surely in $\mathbb{P}$ and the pair satisfies Equation \eqref{BSDE}.
We wish to find $(Y_0,Z_0)$ by solving the problem backwards in time. We do this by first discretizing Equation \eqref{BSDE} along the time direction $\Lambda: 0=t_0<t_1<t_2<\ldots<t_P = T$. In this article, we assume that we have a fixed uniform time-step $\Delta t = t_{p+1}-t_{p},\, \forall p$ and define $\Delta \omega_{p+1} := \omega_{t_{p+1}}-\omega_{t_p}\sim\mathcal{N}(0,\Delta t)$, a normally distributed process. 
The discretized forward process $X^\Delta$ is defined by $$X^\Delta _{t_0} := x_0,\, X^\Delta_{t_{p+1}} := X^\Delta_{t_p} + \mu(t_p, X^\Delta_{t_p}) \Delta t +\sigma(t_p,X^\Delta_{t_p}) \Delta \omega_{p+1},\quad p=0,\ldots ,P-1,$$ which is derived from the classical Euler discretization.
Note that we only defined the discretized process at the discrete time points here. 
While it is possible to extend the definition to $[0,T]$, it is not necessary for our presentation.

Adopting the notation ${\bf X} = (X,Y,Z)$, we can observe from the backward equation,
\begin{equation}
Y_{t_p} = Y_{t_{p+1}}+\int^{t_{p+1}}_{t_p}f(\tau, {\bf X}_\tau)d\tau-\int^{t_{p+1}}_{t_p}Z_\tau d\omega_\tau,
\label{backward_time_period}
\end{equation} 
that a simple discretization is not sufficient to produce an approximation. 
It is because we would require the value of $Y_{t_{p+1}}$ to approximate $Y_{t_p}$, but $Y_{t_{p+1}}$ is not $\mathcal{F}_{t_p}$ adapted. 
To tackle this problem, we follow the standard methods in literature, for example, in \cite{bouchard_touzi_2004}. By taking conditional expectations on both sides of Equation \eqref{backward_time_period} and approximating the time integral
by a $\theta$-time discretization, as in \cite{ruijter_oosterlee_2015}, we get
\begin{align}
Y_{t_p} 
= & \;
\mathbb{E}_p[Y_{t_{p+1}}]+\int^{t_{p+1}}_{t_p} \mathbb{E}_p[f(\tau,{\bf X}_\tau)] d\tau\nonumber\\
 \approx & \;
\mathbb{E}_p[Y_{t_{p+1}}] 
+ \Delta t \theta_1 f(t_p, {\bf X}_{t_p})
+ \Delta t (1-\theta_1) \mathbb{E}_p[f(t_{p+1},{\bf X}_{t_{p+1}})],\,
\theta_1\in[0,1].\nonumber
\end{align}
The notation $\mathbb{E}_p$ and $\mathbb{E}^x_p$ are defined as $$\mathbb{E}_p[\cdot]:=\mathbb{E}[\cdot|X_{t_p}]=\mathbb{E}[\cdot|\mathcal{F}_{t_p}], \mbox{ and }\mathbb{E}^x_p[\cdot] = \mathbb{E}[\cdot|X_{t_p}=x].$$ For the process $Z$, we derive a recursive approximation formula by multiplying $\Delta \omega_{p+1}$ to both sides of Equation \eqref{backward_time_period} and taking conditional expectations,   
\begin{align}
0 
= &
\mathbb{E}_p[Y_{t_{p+1}} \Delta \omega_{p+1}] + \int^{t_{p+1}}_{t_p}\mathbb{E}_p[f(\tau,{\bf X}_\tau)\Delta \omega_{p+1}]d\tau
-\int^{t_{p+1}}_{t_p}\mathbb{E}_p[Z_\tau]d\tau \nonumber\\
\approx & 
\mathbb{E}_p[Y_{t_{p+1}} \Delta \omega_{p+1}]
+ \Delta t (1-\theta_2) \mathbb{E}_p[f(t_{p+1},{\bf X}_{t_{p+1}}) \Delta \omega_{p+1}]\nonumber\\
&- \Delta t \theta_2 Z_{t_p} - \Delta t (1-\theta_2)\mathbb{E}_p[Z_{t_{p+1}}],\; 
\theta_2\in(0,1].\nonumber
\end{align}
Again, we applied the $\theta$-method to the time integral. 
However, the two parameters for the $\theta$-method, $\theta_1$ and $\theta_2$, need not necessarily be the same. We define a discrete-time approximation $(Y^\Delta,Z^\Delta)$ for $(Y,Z)$:
\begin{subequations}
\begin{align}
&\;
Y^\Delta_{t_P} := g(X^\Delta_{t_P}), \quad 
Z^\Delta_{t_P}=\sigma(t_P,X^\Delta_{t_P})D_xg(X^\Delta_{t_P}),\label{terminal_conditions}\\
&
\mbox{for } p = P-1,\ldots,0,\nonumber\\
&\;
\;\; Z^\Delta_{t_p} := -\frac{1-\theta_2}{\theta_2}\mathbb{E}_p[Z^\Delta_{t_{p+1}}]
+\frac{1}{\theta_2 \Delta t}\mathbb{E}_p[Y^\Delta_{t_{p+1}}\Delta\omega_{p+1}]
+\frac{1-\theta_2}{\theta_2}\mathbb{E}_p[f(t_{p+1},{\bf X}^\Delta_{t_{p+1}}) \Delta \omega_{p+1}],\label{discrtized_z}\\
&\;
\;\; Y^\Delta_{t_p} := \mathbb{E}_p[Y^\Delta_{t_{p+1}}]
+ \Delta t \theta_1 f(t_p,{\bf X}^\Delta_{t_p}) 
+ \Delta t (1-\theta_1)\mathbb{E}_p[f(t_{p+1},{\bf X}^\Delta_{t_{p+1}})],\label{discrtized_y}
\end{align}
\label{discretized}
\end{subequations}
Again, we used the simplifying notation ${\bf X}^{\Delta} = (X^{\Delta}, Y^\Delta, Z^\Delta)$.
Note that various combinations of $\theta_1$ and $\theta_2$ give different approximation schemes. 
We have an explicit scheme for $Y^\Delta$ if $\theta_1=0$, and an implicit scheme otherwise. 
The variable $Z^\Delta_{t_p}$ depends on $\mathbb{E}_p[Z^\Delta_{t_{p+1}}]$ only if $\theta_2\neq1$. 
Also, since the terminal processes $Y^\Delta_{t_P}$ and $Z^\Delta_{t_P}$ are deterministic with respect to $X^\Delta_{t_P}$ and $X^\Delta$ is a Markov process, one can show by induction that $Y^\Delta_{t_p} = y^\Delta_p(X^\Delta_{t_p})$, $Z^\Delta_{t_p} = z^\Delta_p(X^\Delta_{t_p})$, where $z^\Delta_p$ and $y^\Delta_p$ are deterministic functions related to the discretization scheme. 
We shall use the notation $(y^\Delta_p(x),z^\Delta_p(x))$ when we wish to emphasize the dependence of our approximation.

When solving the approximation in Equation \eqref{discretized}, one needs to calculate multiple conditional expectations at each time-step. In this article, our method of choice is a wavelet-based method, introduced in \cite{ortiz_gracia_oosterlee_2016}.

\subsection{Standing assumptions}
Throughout the paper, in addition to the conditions for $\mu$ and $\sigma$, we assume the following to be true:
\begin{itemize}
\item[(A1)] The function $f(t,x,y,z)$ is continuous with respect to $(x,y,z)$ and all one-sided derivatives exist.
\item[(A2)] The function $g(x)$ is continuous in $x$ and all left- and right-side derivatives exist. 
\end{itemize}
When dealing with the discretization scheme with $\theta_1\neq 1$, we add one more assumption: 
\begin{itemize}
\item[(A3)] The function $f$ is Lipschitz in $(y,z)$, namely, $$|f(t,x,y_1,z_1)-f(t,x,y_2,z_2)| \leq M (|y_1-y_2|+|z_1-z_2|); \;x,y_1,y_2,z_1,z_2\in\mathbb{R}, t\in[0,T],$$ for some constant $M$.
\end{itemize}
Under assumptions (A1)-(A3) ((A1)-(A2) if $\theta_1 = 1$), the numerical algorithm for the FBSDE, which will be given in Section \ref{swift_method}, is well-defined. 
Although, $D_xg$ in Equation \eqref{terminal_conditions} may be undefined at countable many distinctive points, it can just be replaced by a one-sided derivative at these points. 
The conditions above can also ensure satisfactory performance of our algorithms in general, with more details coming in Section \ref{section:errors_and_computational_complexity}. 
However, the above conditions are not sufficient to assure the existence of the pair of adapted processes $(Y,Z)$, which is the foundation of any  numerical algorithm. We introduce an extra assumption to ensure the existence and uniqueness of the solution $(Y,Z)$ to Equation \eqref{BSDE}.
\begin{itemize}
\item[(A4)] There exists a constant $M$ such that $$|f(t,x,y,z)|+|g(x)|\leq M(1+|x|^\nu+|y|+|z|),\; \forall x,y,z\in \mathbb{R}, t\in[0,T], \nu\geq\frac{1}{2}.$$  
\end{itemize}
For further results on the existence and uniqueness of the solution of BSDEs, readers are referred to \cite{pardoux_peng_1992} and further research extending this result. The last point we would like to raise is that the convergent rate of the discretized process to the original process also depends on the functions $\mu$, $\sigma$, $f$ and $g$. We shall discuss these requirements in Section \ref{discretization_error_of_the_BSDE}; these conditions are not included in the standing assumptions.

\section{SWIFT method}\label{swift_method}
For the computation of the expectations appearing in the discrete FBSDEs (\ref{discretized}), we will use the wavelet-based SWIFT method. In this section, we first provide an alternative derivation for the SWIFT formula used in \cite{ortiz_gracia_oosterlee_2016} and \cite{maree_ortiz-gracia_oosterlee_2016}. Instead of using an approximation space based on Shannon wavelets on the whole real line, we construct a Shannon wavelet scaling function on a {\em finite domain} and derive our formula with this scaling function. This  approach is beneficial since a truncation of the integration range is required when calculating the wavelet coefficients for the SWIFT formula. Incorporating the truncated range in the scaling function simplifies the formula for the  approximation error. Next, we apply the SWIFT method to compute the conditional expectations in the discrete-time approximation of the FBSDE in Equation \eqref{discretized} and produce an algorithm for solving FBSDEs recursively, backwards in time. In Sections \ref{scaling_functions} and \ref{quick_SWIFT_formula_and_coefficients}, we derive the SWIFT formula with the finite range approach and compute the relevant expectations for the FBSDE algorithm. Section \ref{quick_SWIFT_approximation_of_function_z} and \ref{quick_SWIFT_approximations_of_function_y} discuss the approximations of the functions $z^\Delta_p(x)$ and $y^\Delta_p(x)$.

\subsection{Scaling Functions}\label{scaling_functions}
We begin our discussion with some preliminary definitions and results. For any fixed real number $m$ and integer $J \neq 0$, we define an inner product and a norm:
\begin{equation*}
<v,w>:=\frac{2^m}{J}\int^{2^{-m}J}_{-2^{-m}J}v(x)w(x) dx, \quad
||v||_2:=\sqrt{<v,v>}.
\end{equation*}  
A function $v$ is said to be in the $L^2((-2^{-m}J,2^{-m}J])$ space if $||v||_2$ is a finite number. It can be shown that  the set $$\Gamma_{m,J}:=\left\{\left.\cos\left(\left(\frac{2n-1}{2J}\pi\right)2^m x\right),\sin\left(\left(\frac{2n-1}{2J}\pi\right)2^m x\right)\right| n=1,2,\ldots\right\},$$ is orthonormal with respect to this inner product and is dense in $L^2((-2^{-m}J,2^{-m}J])$.

Equipped with the above definitions, we construct an approximation space together with a localized basis, which are the foundations of the truncated SWIFT approximation method. Consider $2J$ distinctive functions $\varphi_{J,r}:\mathbb{R}\rightarrow\mathbb{R}$,
\begin{align}
\varphi_{J,r} (x)
:=& \sum^J_{k=1}\left(
	\cos\left(\left(\frac{2k-1}{2J}\pi\right) 2^m x\right)
	\cos\left(\left(\frac{2k-1}{2J}\pi\right) 2^m \left(\frac{r}{2^m}\right)\right)
\right.\nonumber\\
&\qquad\left.
	+\sin\left(\left(\frac{2k-1}{2J}\pi\right) 2^m x\right)
	\sin\left(\left(\frac{2k-1}{2J}\pi\right) 2^m \left(\frac{r}{2^m}\right)\right)
\right)\nonumber\\
=& \sum^J_{k=1}\cos\left(
	\frac{2k-1}{2J}\pi
	\left(2^m x-r\right)
\right)\label{kernel}\\
=& \left\{
	\begin{array}{ll}
		J&\mbox{ if } x = \frac{2J}{2^m}l +\frac{r}{2^m} \mbox{ for } l \mbox{ an even integer,}\\
		-J&\mbox{ if } x = \frac{2J}{2^m}l +\frac{r}{2^m} \mbox{ for } l \mbox{ an odd integer,}\\
		\frac{\sin\left(\pi (2^m x-r)\right)}{2\sin\left(\frac{\pi}{2 J}(2^m x -r)\right)}	& \mbox{ otherwise,}
	\end{array}
\right.\nonumber
\end{align}
where $r=1-J, 2-J, \ldots, J$. This definition is a special case of the scaling functions given in Equation (2.13) of  \cite{fischer_prestin_1997}, in which the authors presented a uniform approach for the construction of wavelets based on orthogonal polynomials. The properties of $\varphi_{J,r}$ have been extensively studied in \cite{fischer_prestin_1997} and those that are relevant to our numerical method are listed in the next theorem along with their proof.
\begin{theorem}
The scaling function $\varphi_{J,r}$, which is defined in Equation \eqref{kernel}, satisfies the following properties:
\begin{enumerate}
\item[(a)] The inner product between two scaling functions is given by the following equation:
\begin{equation*}
<\varphi_{J,r},\varphi_{J,s}> = \varphi_{J,r}\left(\frac{s}{2^m}\right), \qquad r,s = 1-J,2-J, \ldots, J.
\end{equation*}
Thus, $\{\varphi_{J,r}|r = 1-J,2-J, \ldots, J\}$ form an orthogonal set.

\item[(b)] The scaling function $\varphi_{J,r}$ is localized around $\frac{r}{2^m}$. By this we mean that for the subspace
$$V_J:=\mbox{span}\left\{
	\left.
		\cos\left(\frac{(2k-1)\pi}{2J}2^m x\right),
		\sin\left(\frac{(2k-1)\pi}{2J}2^m x\right)
	\right|
	k = 1, 2, \ldots, J
\right\},$$
we have 
$$\left|\left|
	\frac{\varphi_{J,r}}{\varphi_{J,r}(2^{-m}r)}
\right|\right|_2
=\min\left\{||\chi||_2:\chi\in V_J, \chi\left(\frac{r}{2^m}\right)=1\right\} .$$

\item[(c)] $\{\varphi_{J,r}|r=1-J,2-J,\ldots,J\}$  is a basis for $V_J$.

\item[(d)] The scaling function $\varphi_{J,r}$ is also a kernel polynomial in the sense that for any function $v$ in $V_j$, we have $$<v,\varphi_{J,r}> = v\left(\frac{r}{2^m}\right).$$
\end{enumerate}
\begin{proof}
We can demonstrate (a) by direct computation and applying the orthonormality of the set $\Gamma_{m,J}$, such that 
\begin{align*}
<\varphi_{J,r},\varphi_{J,s}>
=& \sum^n_{k=1}\left(
	\cos\left(\left(\frac{2k-1}{2J}\pi\right)2^m\left(\frac{r}{2^m}\right)\right)
	\cos\left(\left(\frac{2k-1}{2J}\pi\right)2^m\left(\frac{s}{2^m}\right)\right)
\right.\nonumber\\
&\qquad+\left.
	\sin\left(\left(\frac{2k-1}{2J}\pi\right)2^m\left(\frac{r}{2^m}\right)\right)
	\sin\left(\left(\frac{2k-1}{2J}\pi\right)2^m\left(\frac{s}{2^m}\right)\right)
\right)\nonumber\\
=& \varphi_{J,r}\left(\frac{s}{2^m}\right)\\
=& \left\{
	\begin{array}{ll}
		J, & \mbox{ if } s=r,\\
		\frac{\sin((s-r)\pi)}{2\sin\left(\frac{(s-r)\pi}{2J}\right)}=0, & \mbox{ otherwise.}
	\end{array}
\right.
\end{align*}
Next, let 
$$\chi(x) = \sum^J_{k=1}\left(
	c_k \cos\left(\frac{(2k-1)\pi}{2J}2^m x\right)
	+ d_k \sin\left(\frac{(2k-1)\pi}{2J}2^m x\right)
\right),$$ 
and $\chi\left(\frac{r}{2^m}\right)=1 $ for some constants $c_k$ and $d_k$. By a simple application of the Cauchy-Schwarz inequality, we get
\begin{align*}
1 = \chi^2\left(\frac{r}{2^m}\right)
={}& \left(\sum^J_{k=1}\left(
	c_k \cos\left(\frac{(2k-1)\pi}{2J}r\right)
	+ d_k \sin\left(\frac{(2k-1)\pi}{2J}r\right)
\right)\right)^2\\
\leq{} & \left( \sum^J_{k=1}(c^2_k+d^2_k)\right)
\left(
	\sum^J_{k=1}\left(
		\left(\cos\left(\frac{(2k-1)\pi}{2J}r\right)\right)^2
		+\left(\sin\left(\frac{(2k-1)\pi}{2J}r\right)\right)^2
	\right)
\right)\\
={}& J ||\chi||^2_2.
\end{align*}
The last equality follows from the orthonormality of set $\Gamma_{m,J}$ and since $c_k$ and $d_k$ are arbitrary, $||\chi||^2_2\geq \frac{1}{J}$ for any $\chi\in V_J$, such that $\chi\left(\frac{r}{2^m}\right)=1$. On the other hand, as 
$$\varphi_{J,r}\left(\frac{r}{2^m}\right)= \sum^J_{k=1}\left(
	\left(\cos\left(\frac{(2k-1)\pi}{2J}2^m \frac{r}{2^m}\right)\right)^2
	+\left(\sin\left(\frac{(2k-1)\pi}{2J}2^m \frac{r}{2^m}\right)\right)^2
\right)=J,$$
We know that 
\begin{equation*}
\left|\left|
	\frac{\varphi_{J,r}}{\varphi_{J,r}(2^{-m}r)}
\right|\right|^2_2
=\frac{<\varphi_{J,r},\varphi_{J,r}>}{J^2}=\frac{1}{J},
\end{equation*} 
which concludes the proof of (b). Statement (c) is true since  $\{\varphi_{J,r}|r=1-J,2-J,\ldots,J\}$ has the same number of elements as the spanning set of $V_J$; its elements are orthogonal and therefore independent of each other.

Part (d) follows from parts (a) and (c). For any $v\in V_J$, $v(\cdot) = \sum^J_{s=1-J}\mathcal{V}_s \varphi_{J,s}(\cdot)$ by (c), and from part (a), we have 
\begin{equation*}
<v,\varphi_{J,r}>
=\sum^J_{s=1-J}\mathcal{V}_s<\varphi_{J,s},\varphi_{J,r}>=\sum^J_{s=1-J}\mathcal{V}_s\varphi_{J,s}\left(\frac{r}{2^m}\right)=v\left(\frac{r}{2^m}\right).
\end{equation*} 
\end{proof}
\end{theorem}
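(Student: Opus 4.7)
The plan is to verify the four statements in the natural order (a) $\to$ (b) $\to$ (c) $\to$ (d), exploiting the fact that (c) and (d) are essentially corollaries of (a), while (b) is the one genuine calculation.

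For (a), I would just substitute the explicit trigonometric representation of $\varphi_{J,r}$ (the first line of \eqref{kernel}) into $\langle \varphi_{J,r}, \varphi_{J,s}\rangle$ and expand. The product of the two sums produces four types of cross terms; by the orthonormality of $\Gamma_{m,J}$ with respect to $\langle\cdot,\cdot\rangle$, only the diagonal terms survive, namely
$\cos\bigl(\tfrac{(2k-1)\pi}{2J}r\bigr)\cos\bigl(\tfrac{(2k-1)\pi}{2J}s\bigr) + \sin\bigl(\tfrac{(2k-1)\pi}{2J}r\bigr)\sin\bigl(\tfrac{(2k-1)\pi}{2J}s\bigr)$, summed over $k$. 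Recognising this as $\varphi_{J,r}(s/2^m)$ (using the collapsed cosine form in \eqref{kernel}) gives the claim, and specialising to $s=r$ yields $\|\varphi_{J,r}\|_2^2 = J$, which will be used repeatedly below.

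For (b), the key is that any $\chi \in V_J$ is parametrised by coefficients $c_k, d_k$ with $k=1,\ldots,J$, and $\|\chi\|_2^2 = \sum_{k=1}^J (c_k^2 + d_k^2)$ by orthonormality of $\Gamma_{m,J}$. The constraint $\chi(r/2^m)=1$ is the single linear equation $\sum_k \bigl(c_k\cos(\tfrac{(2k-1)\pi}{2J}r) + d_k \sin(\tfrac{(2k-1)\pi}{2J}r)\bigr) = 1$. Applying Cauchy--Schwarz to this inner product in $\mathbb{R}^{2J}$ bounds $1 \le J\|\chi\|_2^2$, so $\|\chi\|_2^2 \ge 1/J$. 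Equality is attained by choosing $(c_k,d_k)$ proportional to $(\cos(\tfrac{(2k-1)\pi}{2J}r), \sin(\tfrac{(2k-1)\pi}{2J}r))$; normalising gives exactly $\varphi_{J,r}/J = \varphi_{J,r}/\varphi_{J,r}(r/2^m)$, whose squared norm is $J/J^2 = 1/J$ by (a). Hence this scaled scaling function is the minimiser.

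For (c), a counting-plus-orthogonality argument suffices: $V_J$ is spanned by $2J$ trigonometric functions, so $\dim V_J \le 2J$; the set $\{\varphi_{J,r}\}_{r=1-J}^{J}$ lies in $V_J$, has $2J$ members, and is orthogonal (hence linearly independent) by (a), so it is a basis. For (d), expand any $v\in V_J$ in this basis as $v = \sum_s \mathcal{V}_s \varphi_{J,s}$, take the inner product with $\varphi_{J,r}$, and apply (a) once more to get $\langle v,\varphi_{J,r}\rangle = \sum_s \mathcal{V}_s \varphi_{J,s}(r/2^m) = v(r/2^m)$. The main obstacle is the extremal characterisation (b); the rest is bookkeeping on top of (a).
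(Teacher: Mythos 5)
Your proposal is correct and follows essentially the same route as the paper: direct expansion plus orthonormality of $\Gamma_{m,J}$ for (a), the Cauchy--Schwarz lower bound $\|\chi\|_2^2 \geq 1/J$ together with the computation $\|\varphi_{J,r}/\varphi_{J,r}(2^{-m}r)\|_2^2 = 1/J$ for (b), a dimension-count with orthogonality for (c), and basis expansion plus (a) for (d). The only cosmetic difference is that you identify the minimiser in (b) via the equality case of Cauchy--Schwarz, whereas the paper simply verifies the norm of the normalised scaling function directly; both close the argument the same way.
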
 

The space $V_J$ and the scaling functions $\{\varphi_{J,r}\}_{r=1-J,\ldots, J}$ are our approximation space and our basis functions, respectively. As a result, for any function $v$ in the $L^2((-2^{-m}J,2^{-m}J])$ space, its projection on $V_J$, denoted as $H_{V_J} v$, can be written in the form $\frac{1}{J}\sum^J_{r=1-J}<H_{V_J} v,\varphi_{J,r}>\varphi_{J,r} = \frac{1}{J}\sum^J_{r=1-J}<v,\varphi_{J,r}>\varphi_{J,r}$. 

\subsection{Quick SWIFT formula and coefficients}\label{quick_SWIFT_formula_and_coefficients}
Assume we wish to approximate a finite integral $\int_\mathbb{R}v(\varsigma)q(\varsigma)d\varsigma$, where $v$ is within $L^2((-2^{-m}J,2^{-m}J])$ and we have $\int_\mathbb{R}q(\varsigma)d\varsigma<\infty$. 
We shall approach this problem by replacing $v$ by $H_{V_J}v$. This gives us the following approximation: 
\begin{align}
\int_{\mathbb{R}}v(\varsigma)q(\varsigma) d\varsigma 
\approx& \int_{\mathbb{R}}q(\varsigma)\frac{1}{J}\sum^J_{r=1-J}<v,\varphi_{J,r}>\varphi_{J,r}(\varsigma)d\varsigma \nonumber\\
=& \int_\mathbb{R}q(\varsigma)
\frac{1}{J}\sum^J_{r=1-J}
\frac{2^m}{J}\int^{2^{-m}J}_{-2^{-m}J} v(\varrho)
\sum^J_{k_1=1}\cos\left(C_{k_1}(2^m\varrho-r)\right) d\varrho
\sum^J_{k_2=1}\cos\left(C_{k_2}(2^m\varsigma-r)\right) d\varsigma
\nonumber\\
=& \sum^J_{r=1-J}\int_\mathbb{R}q(\varsigma)
\frac{2^{\frac{m}{2}}}{J}\sum^J_{k_2=1}\cos\left(C_{k_2}(2^m\varsigma-r)\right) d\varsigma
\int^{2^{-m}J}_{-2^{-m}J} v(\varrho)
\frac{2^{\frac{m}{2}}}{J}\sum^J_{k_!=1}\cos\left(C_{k_2}(2^m\varrho-r)\right) d\varrho,\nonumber
\end{align}
which is the SWIFT formula, proposed in \cite{ortiz_gracia_oosterlee_2016}, with $C_k:=\frac{2k-1}{2J}\pi$.
In the above derivation, we only listed the dependency to dummy variable $\varsigma$ of the functions $v$ and $q$. 
In practice, $v$ and $q$ will depend on other variables, like for example, time.
We will put the additional dependency in our notation without further notice in the remainder of this article whenever it is necessary for the presentation.

\begin{remark} While the accuracy of the approximation depends on other properties of the functions $v$ and $q$ and shall be studied in the rest of this article, $v\in L^2((-2^{-m}J,2^{-m}J])$ and $q$ being integrable are the only conditions needed for the above approximation to be well-defined.  
\end{remark}

\begin{remark} We only define the approximation on the set $(-2^{-m}J,2^{-m}J]$ that centers around zero. For any function $v$ such that $\int^b_a(v(\varsigma))^2 d\varsigma<\infty$ for a finite range $(a,b]$, we need to perform a change of variables $\varsigma'=\varsigma - \frac{a+b}{2}$, for $\varsigma'\in (a-\frac{a+b}{2},b-\frac{a+b}{2}]$, let $v'(\varsigma')=v(\varsigma'+\frac{a+b}{2})=v(\varsigma)$. Then, we can pick $J$ and $m$ accordingly and perform the approximation on $v'$. With a slight abuse of notation, we assume that we perform this tedious step implicitly and apply the SWIFT formula with any finite range $[a,b]$. 
\end{remark}

In this article, we shall adopt the quick variant of the SWIFT formula proposed in \cite{maree_ortiz-gracia_oosterlee_2016}. Instead of replacing $v$ with $H_{V_J}v$, we approximate $v$ by $$v(x) \approx \frac{1}{J}\sum^{J}_{r=1-J}v\left(\frac{r}{2^m}\right)\varphi_{J,r}(x).$$ 
The reason behind this is left for the error section, but this gives an approximation for $\mathbb{E}^x_p[v(t_{p+1},X^\Delta_{t_{p+1}})]$, which we see in the discrete-time approximation of FBSDE,
\begin{align}
\mathbb{E}^x_p[v(t_{p+1},X^\Delta_{t_{p+1}})]  
\approx & 
\mathbb{E}^x_p\left[
	\frac{1}{J}
	\sum^{J}_{r=1-J} 
	v\left(t_{p+1},\frac{r}{2^m}\right) 
	\varphi_{J,r}(X^\Delta_{t_{p+1}})
\right] 
 = 
 \frac{1}{J}\sum^{J}_{r=1-J}v\left(t_{p+1},\frac{r}{2^m}\right)\mathbb{E}^x_p[\varphi_{J,r}(X^\Delta_{t
 _{p+1}})].
 \nonumber
\end{align}
The expectation $\mathbb{E}^x_p[\varphi_{J,r}(X^\Delta_{t_{p+1}})]$ can be computed by
\begin{align}
\mathbb{E}^x_p[\varphi_{J,r}(X^\Delta_{t_{p+1}})] & = 
\mathbb{E}^x_p\left[
	\sum^J_{k=1}\cos\left(
		2^m C_k X^\Delta_{t_{p+1}} - C_k r
	\right)
\right]\nonumber\\
& = \Re\left\{ 
	\sum^J_{k=1}\mathbb{E}^x_p\left[	
		\exp\left(
			\imath 2^m C_k (x +\mu(t_p,x) \Delta t +\sigma(t_p,x) \Delta \omega_{p+1})
		\right)
		\exp\left(-\imath C_k r	\right)
	\right]
\right\}\nonumber\\
& = \Re \left\{\sum^J_{k=1} 
	\exp\left(
		\imath  2^m C_k x
	\right)
	\Phi\left( t_p, x, 2^m C_k \right)
	\exp\left(-\imath C_k r \right)
\right\}, \label{characteristic_SWIFT_basis}
\end{align}
where the real part of a complex number is denoted by $\Re\{\}$ and $\Phi$ is the characteristic function of $X^\Delta_{t_{p+1}}-X^\Delta_{t_p}$, i.e.
\begin{equation*}
\Phi(\tau,\varsigma,\varrho):=
\exp\left(
	\imath \varrho  \mu(\tau,\varsigma) \Delta t
	-\frac{1}{2} \varrho^2 \sigma^2(\tau, \varsigma) \Delta t
\right).
\end{equation*}
The authors of \cite{maree_ortiz-gracia_oosterlee_2016} demonstrated how to calculate the vector $(\mathbb{E}^x_p[\varphi_{J,r}(X^\Delta_{t_{p+1}})])_{r=(1-J,\ldots, J)}$ with a Fast Fourier Transform (FFT) algorithm. 
The computations induced by Equation \eqref{characteristic_SWIFT_basis} in our algorithm have the computation complexity of ${\rm O}(J\log(J))$.

Expectations in the form $\mathbb{E}^x_p[v(X^\Delta_{t_{p+1}})\Delta \omega_{p+1}]$ also occur in the discrete-time approximation of the FBSDE and they can be computed by:
\begin{align}
\mathbb{E}^x_p[v(t_{p+1},X^\Delta_{t_{p+1}})\Delta \omega_{p+1}] 
\approx &
\mathbb{E}^x_p\left[
	\frac{1}{J}
	\sum^{J}_{r=1-J}
	v\left(t_{p+1},\frac{r}{2^m}\right)
	\varphi_{J,r}(X^\Delta_{t_{p+1}})
	\Delta \omega_{p+1}
\right] \nonumber\\
= &
\frac{1}{J}\sum^{J}_{r=1-J}
v\left(t_{p+1},\frac{r}{2^m}\right)
\mathbb{E}^x_p[\varphi_{J,r}(X^\Delta_{t_{p+1}})\Delta \omega_{p+1}],
\nonumber
\end{align}
with $\mathbb{E}^x_p[\varphi_{J,r}(X^\Delta_{t_{p+1}})\Delta \omega_{p+1}]$ given by the formula:
\begin{align}
&
\mathbb{E}^x_p[\varphi_{J,r}(X^\Delta_{t_{p+1}})\Delta \omega_{p+1}]
= 
\sigma(t_p,x) \Delta t \mathbb{E}^x_p[D_x\varphi_{J,r}(X^\Delta_{t_{p+1}})]\nonumber\\
= & 
\Re\left\{
	\imath \sigma(t_p,x) \Delta t \sum^J_{k=1} 2^m C_k \mathbb{E}^x_p\left[
		\exp\left(
			\imath 2^m C_k
			(x + \mu(t_p,x) \Delta t + \sigma(t_p,x) \Delta \omega_{p+1})
		\right)
		\exp\left(-\imath C_k r\right)
	\right]
\right\}\nonumber\\ 
= &
\Re\left\{
	\imath \sigma(t_p,x) \Delta t 
	\sum^J_{k=1} 2^m C_k 
	\exp\left(\imath 2^m C_k x\right)
	\Phi\left(t_p, x, 2^m C_k\right)
	\exp\left(-\imath C_k r \right)
\right\}, \label{characteristic_SWIFT_Brownian}
\end{align}
where the first equality sign follows from an integration by parts argument and we also note that $
D_x\varphi_{J,r}(x) =  
-\sum^J_{k=1} 2^m C_k \sin \left( 
	2^m C_k x - C_k r
\right).$ 
Once again, we can use the FFT to compute these expectations.

In the next two sections, we will combine the quick variant of the SWIFT formula and the discrete-time approximation of the FBSDE in Equation \eqref{discretized}. 

\subsection{Quick SWIFT approximation of function $z^\Delta_p(x)$}\label{quick_SWIFT_approximation_of_function_z}

There are three different expectations, $\mathbb{E}^x_p[Z^\Delta_{t_{p+1}}]$, $\mathbb{E}^x_p[Y^\Delta_{p+1}\Delta\omega_{t_{p+1}}]$, and $\mathbb{E}^x_p[f(t_{p+1},{\bf X}^\Delta_{t_{p+1}})\Delta\omega_{p+1}]$, that need to be approximated in Equation \eqref{discrtized_z}. Applying the quick SWIFT formula, we have: 
\begin{align}
%
%
\mathbb{E}^x_p[Z^\Delta_{t_{p+1}}]
\approx &
\frac{1}{J}\sum^{J}_{r=1-J}
z^\Delta_{p+1}\left(\frac{r}{2^m}\right)
\Re \left\{\sum^J_{k=1} 
	e^{\imath  2^m C_k x}
	\Phi\left(t_p,x, 2^m C_k \right)
	e^{-\imath C_k r}
\right\};\nonumber\\
%
%
\mathbb{E}^x_p[Y^\Delta_{t_{p+1}}\Delta\omega_{p+1}]
\approx &
\frac{1}{J}\sum^{J}_{r=1-J}
y^\Delta_{p+1}\left(\frac{r}{2^m}\right)
\Re\left\{
	\imath \sigma(t_p,x) \Delta t 
	\sum^J_{k=1} 2^m C_k 
	e^{\imath 2^m C_k x}
	\Phi\left(t_p, x, 2^m C_k\right)
	e^{-\imath C_k r}
\right\};\nonumber\\
\mathbb{E}^x_p[f(t_{p+1},{\bf X}^\Delta_{t_{p+1}}) \Delta \omega_{p+1}]
\approx & 
\frac{1}{J} \sum^{J}_{r=1-J}
f\left(
	t_{p+1},
	\frac{r}{2^m},
	y^\Delta_{p+1}\left(\frac{r}{2^m}\right),
	z^\Delta_{p+1}\left(\frac{r}{2^m}\right)
\right)\cdot\nonumber\\
&
\Re\left\{
	\imath \sigma(t_p,x) \Delta t 
	\sum^J_{k=1} 2^m C_k 
	e^{\imath 2^m C_k x}
	\Phi\left(t_p, x, 2^m C_k\right)
	e^{-\imath C_k r}
\right\}\nonumber.
\end{align}

We denote the approximation of the FBSDE by a SWIFT-type formula combining with the Euler discretization at point $(t_p,x)$ by $(\hat{y}^\Delta_p(x),\hat{z}^\Delta_p(x))$, then $\hat{z}^\Delta_p$ satisfies the following recursive relation: 
\begin{align}
\hat{z}^\Delta_p(x)
= & \;
\Re \left\{
	\frac{1}{J}
	\sum^J_{k=1}
	e^{\imath  2^m C_k x}
	\left[ 
		\Phi\left(t_p,x, 2^m C_k \right)
		\sum^{J}_{r=1-J}
		\left(
			-\frac{1-\theta_2}{\theta_2}
			\hat{z}^\Delta_{p+1}\left(\frac{r}{2^m}\right)	
			e^{-\imath C_k r}
		\right)
	\right]
\right\}\nonumber\\
+ & \Re\left\{
	\frac{\imath}{J}\sigma(t_p,x)
	\sum^J_{k=1}
	e^{\imath 2^m C_k x}
	2^m C_k	\Phi\left(t_p, x, 2^m C_k\right)\cdot
\right.\nonumber\\
&\qquad\left.
	\left[
		\sum^{J}_{r=1-J}\left(
			\frac{1}{\theta_2}\hat{y}^\Delta_{p+1}\left(\frac{r}{2^m}\right)
			+\frac{(1-\theta_2)\Delta t}{\theta_2}
			f\left(
				t_{p+1},
				\frac{r}{2^m},
				\hat{y}^\Delta_{p+1}\left(\frac{r}{2^m}\right),
				\hat{z}^\Delta_{p+1}\left(\frac{r}{2^m}\right)
			\right)
			e^{-\imath C_k r}
		\right)
	\right]
\right\}, \label{function_z}
\end{align}
for $p = 0 ,1, \ldots, P-1$.

\subsection{Quick SWIFT approximation of function $y^\Delta_p(x)$}\label{quick_SWIFT_approximations_of_function_y}

Equation \eqref{discrtized_y} for $Y^\Delta_{t_p}$ contains an explicit and an implicit part if $\theta_1>0$. The explicit part is denoted by:
\begin{equation}
h (t_p, x) 
:= 
\mathbb{E}^x_p[Y^\Delta_{t_{p+1}}]
+\Delta t (1-\theta_1) \mathbb{E}^x_p[f(t_{p+1},{\bf X}^\Delta_{t_{p+1}})].\label{function_h}
\end{equation}
The function $h$ is a linear combination of two expectations, $\mathbb{E}^x_p[Y^\Delta_{t_{p+1}}]$ and $\mathbb{E}^x_p[f(t_{p+1},{\bf X}^\Delta_{t_{p+1}})]$, and they can be approximated by the following quick SWIFT formulas:
\begin{subequations} 
\begin{align}
\mathbb{E}^x_p[Y^\Delta_{t_{p+1}}]
\approx & 
\frac{1}{J}\sum^{J}_{r=1-J}y^\Delta_{p+1}\left(\frac{r}{2^m}\right)
\Re \left\{\sum^J_{k=1} 
	e^{\imath  2^m C_k x}
	\Phi\left(t_p, x, 2^m C_k \right)
	e^{-\imath C_k r}
\right\};\label{initial_guess}\\
\mathbb{E}^x_p[f(t_{p+1},{\bf X}^\Delta_{t_{p+1}})] 
\approx &
\frac{1}{J}\sum^{J}_{r=1-J}
f\left(
	t_{p+1},
	\frac{r}{2^m},
	y^\Delta_{p+1}\left(\frac{r}{2^m}\right),
	z^\Delta_{p+1}\left(\frac{r}{2^m}\right)
\right)
\Re \left\{\sum^J_{k=1} 
	e^{\imath  2^m C_k x}
	\Phi\left(t_p, x, 2^m C_k \right)
	e^{-\imath C_k r}
\right\}.
\end{align}
\end{subequations}
Therefore, we have an approximation for $h$:
\begin{align}
\hat{h}(t_p,x)
:= &
\mathbb{E}^x_p[\hat{y}^\Delta_{p+1}(X^\Delta_{t_{p+1}})]
+\Delta t (1-\theta_1) \mathbb{E}^x_{p}[f(t_{p+1},X^\Delta_{t_{p+1}},\hat{y}^\Delta_{p+1}(X^\Delta_{t_{p+1}}),\hat{z}^\Delta_{p+1}(X^\Delta_{t_{p+1}}))]
\nonumber\\
 = & \Re \left\{
	\frac{1}{J}\sum^J_{k=1}
	e^{\imath 2^m C_k x}
		\Phi\left(t_p, x, 2^m C_k \right)\cdot
\right.\nonumber\\
& \quad
\left.
		\sum^{J}_{r=1-J}
		\left[
			\hat{y}^\Delta_{p+1}\left(\frac{r}{2^m}\right)
			+ \Delta t (1 - \theta_1) 
			f\left(
				t_{p+1},
				\frac{r}{2^m},
				\hat{y}^\Delta_{p+1}\left(\frac{r}{2^m}\right),
				\hat{z}^\Delta_{p+1}\left(\frac{r}{2^m}\right)
			\right)
		\right]
		e^{-\imath C_k r}
\right\},\label{eq:approximation_function_h}
\end{align}
and the function $\hat{y}^\Delta_p$ is defined implicitly by:
\begin{equation}
\hat{y}^\Delta_p(x) =  \Delta t \theta_1 f(t_p,x,\hat{y}^\Delta_p(x),\hat{z}^\Delta_p(x)) + \hat{h}(t_p,x). \label{function_y}
\end{equation}

Whenever $\theta_1\neq 0$, {\it Picard iterations} are performed $I$ times to recover $\hat{y}^\Delta_p(x)$, which is the same procedure used in both \cite{gobet_lemor_warin_2005} and \cite{ruijter_oosterlee_2015}. 
The initial guess for the iterations is defined as the approximation of $\mathbb{E}^x_p[Y^\Delta_{t_{p+1}}]$ as in Equation \eqref{initial_guess}.
The conditions for convergent iterations and an extra error induced will be discussed in Section \ref{picard_iterations_error}.

The overall algorithm to generate an approximation $(\hat{y}^\Delta_0(x),\hat{z}^\Delta_0(x))$ for $(Y_0,Z_0)$ has been summarized in Algorithm \ref{quick_SWIFT_method}.

\begin{algorithm}[h]
\Begin{
	\For{$s = 1-J$ to $J$}{
		$\hat{y}^\Delta_P(2^{-m} s) = g(2^{-m} s)$, 
		$\hat{z}^\Delta_P(2^{-m} s) = \sigma D_x g(2^{-m} s)$ and \\ $\hat{f}^\Delta_P (2^{-m} s) = f(T,2^{-m} s, \hat{y}^\Delta_P(2^{-m} s),\hat{z}^\Delta_P(2^{-m} s)).$ 
	}
	Compute $(\mathbb{E}^{2^{-m}r}_{P-1}[\varphi_{J,k}(X^\Delta_{t_P})])_{r,k=1-J,\ldots,J}$ and $(\mathbb{E}^{2^{-m} r}_{P-1}[\varphi_{J,k}(X^\Delta_{t_P})\Delta\omega_P])_{r,k=1-J,\ldots,J}$ with (\ref{characteristic_SWIFT_basis}) and (\ref{characteristic_SWIFT_Brownian}).\\
	\For{$p=P-1$ to 1}{
		Compute the function $(\hat{z}^\Delta_p(2^{-m}s))_{s=1-J,\ldots, J}$ with (\ref{function_z}).\\
		Compute the function $(\hat{y}^\Delta_p(2^{-m} s))_{s=1-J,\dots, J}$ with (\ref{function_y}) and Picard iterations if necessary.\\
		Compute the functions $(f(t_p,2^{-m}s,\hat{y}^\Delta_p(2^{-m}s),\hat{z}^\Delta_p(2^{-m}s)))_{s=1-J,\ldots,J}$.\\
		Compute $(\mathbb{E}^{2^{-m}r}_{p-1}[\varphi_{J,k}(X^\Delta_{t_p})])_{r,k=1-J,\ldots,J}$ and $(\mathbb{E}^{2^{-m} r}_{p-1}[\varphi_{J,k}(X^\Delta_{t_p})\Delta\omega_p])_{r,k=1-J,\ldots,J}$ with (\ref{characteristic_SWIFT_basis}) and (\ref{characteristic_SWIFT_Brownian}) if the distribution of $(X^\Delta_{t_p}-X^\Delta_{t_{p-1}})$ is time-dependent.\\
	}
	Compute $\hat{z}^\Delta_0(x_0)$ and $\hat{y}^\Delta_0(x_0)$.
}	
\caption{Quick SWIFT method.}\label{quick_SWIFT_method}
\end{algorithm}

\section{Errors and computational complexity}\label{section:errors_and_computational_complexity}
In this section, we shall discuss the major components of the error when solving an FBSDE with a SWIFT-type method. They are the discretization error of the FBSDE, the error of approximation with the SWIFT formula and the error introduced by the Picard iteration. We will also discuss the computational complexity of the SWIFT method. For notational simplicity, we shall use $M$ to denote a generic constant whose value and dependency may change from line to line.

\subsection{Discretization error of the FBSDE}\label{discretization_error_of_the_BSDE}
The error due to the discrete-time approximation of the stochastic process depends on the parameters $\theta_1$ and $\theta_2$, the drift $\mu$, the  volatility $\sigma$, the driver function $f$ and the terminal condition $g$. It is difficult to provide a universal result for all FBSDEs for which our method can be applied. However, under some specific assumptions, we can derive an error bound for the global error due to time discretization. Adopting the following error notation:
\begin{align}
\varepsilon^y_p(X_p)
:= &
y_p(X_{t_p}) - y^\Delta_p(X^\Delta_{t_p}),
\;
\varepsilon^z_p
:=
z_p(X_{t_p}) - z^\Delta_p(X^\Delta_{t_p}),
\;
\varepsilon^f_p(X_{t_p})
:=
f(t_p,{\bf X}_{t_p}) - f(t_p, {\bf X}^\Delta_{t_p}),\nonumber
\end{align}
one of the results for the discretization error is  in the following theorem.
\begin{theorem}[\cite{ruijter_oosterlee_2015},Theorem 1.]\label{discretization_convergence}
Assume that the forward process has constant coefficients $\mu$ and $\sigma$ and the discretization scheme with $\theta_1=\theta_2=\frac{1}{2}$. If 
\begin{equation*}
\mathbb{E}^x_{P-1}[|\varepsilon^z_{P}|]\sim\mathcal{O}((\Delta t)^3),\;
\mathbb{E}^x_{P-1}[|\varepsilon^y_P|]\sim\mathcal{O}((\Delta t)^3),
\end{equation*}
then
\begin{equation*}
\mathbb{E}^x_0\left[|\varepsilon^y_p|+\sqrt{\Delta t}|\varepsilon^z_p|\right]\leq M(\Delta t)^2, \mbox{ for } 1\leq p \leq P,
\end{equation*}
where the constant $M$ depends on numbers $T$, $\mu$ and $\sigma$ and functions $g$ and $f$.
\end{theorem}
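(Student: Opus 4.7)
The plan is to combine a local truncation analysis of the $\theta_1=\theta_2=\tfrac{1}{2}$ quadrature (which is exactly the trapezoidal rule) with a backward-in-time stability estimate driven by the Lipschitz property of $f$ and the It\^o isometry. Since the drift and volatility are constant, the forward process $X$ has no discretization error, i.e.\ $X_{t_p} = X^\Delta_{t_p}$ along the grid, which removes a layer of complication and allows us to interpret $\varepsilon^y_p$ and $\varepsilon^z_p$ as errors in the values of the semilinear PDE solution $(y_p,z_p)$ sampled at the common point $X_{t_p}$.

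The first step is to derive the exact backward relation
\begin{equation*}
Y_{t_p} = \mathbb{E}_p[Y_{t_{p+1}}] + \int_{t_p}^{t_{p+1}} \mathbb{E}_p[f(\tau,{\bf X}_\tau)]\,d\tau,
\end{equation*}
together with the analogous relation obtained by multiplying by $\Delta\omega_{p+1}$ and using the It\^o isometry,
\begin{equation*}
\Delta t\, Z_{t_p} = -\mathbb{E}_p\!\left[\int_{t_p}^{t_{p+1}}(Z_\tau - Z_{t_p})\,d\tau\right] + \mathbb{E}_p[Y_{t_{p+1}}\Delta\omega_{p+1}] + \int_{t_p}^{t_{p+1}}\mathbb{E}_p[f(\tau,{\bf X}_\tau)\Delta\omega_{p+1}]\,d\tau,
\end{equation*}
and subtract the scheme \eqref{discrtized_z}--\eqref{discrtized_y}. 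The difference splits into (i) a \emph{local truncation error} $\tau^y_p$, $\tau^z_p$ coming from replacing the time integrals by their trapezoidal approximations, and (ii) a \emph{propagated error} built from $\mathbb{E}_p[\varepsilon^y_{p+1}]$, $\mathbb{E}_p[\varepsilon^z_{p+1}]$, $\mathbb{E}_p[\varepsilon^y_{p+1}\Delta\omega_{p+1}]$ and the Lipschitz increments $\varepsilon^f_p$, $\varepsilon^f_{p+1}$.

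The second step is to bound the local truncation errors. For $\theta=\tfrac{1}{2}$ the trapezoidal rule on $[t_p,t_{p+1}]$ gives $O((\Delta t)^3)$ per step provided the integrand is $C^2$ in $\tau$. Here the integrands are $\tau\mapsto \mathbb{E}_p[f(\tau,{\bf X}_\tau)]$ and $\tau\mapsto \mathbb{E}_p[f(\tau,{\bf X}_\tau)\Delta\omega_{p+1}]$, whose smoothness follows from Malliavin-calculus style representations of $\partial_\tau \mathbb{E}_p[\,\cdot\,]$ in the constant-coefficient case, together with standard $L^p$-bounds on $Y$, $Z$ and their Malliavin derivatives. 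This is the step I expect to be the main obstacle: one needs enough regularity of $f$ and $g$ (beyond the standing assumptions) to justify $C^2$-smoothness in $\tau$ of these conditional expectations, and to estimate the remainder uniformly in the starting point $x$.

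With $|\tau^y_p|, |\tau^z_p| \leq M(\Delta t)^3$ in hand, the third step is a backward induction. Combining the two error recursions, applying the Lipschitz bound on $f$ and using the conditional Cauchy--Schwarz inequality $|\mathbb{E}_p[\varepsilon^y_{p+1}\Delta\omega_{p+1}]| \leq \sqrt{\Delta t}\sqrt{\mathbb{E}_p[|\varepsilon^y_{p+1}|^2]}$ yields a coupled system of the form
\begin{equation*}
\mathbb{E}^x_0[|\varepsilon^y_p|] + \sqrt{\Delta t}\,\mathbb{E}^x_0[|\varepsilon^z_p|] \leq (1+M\Delta t)\bigl(\mathbb{E}^x_0[|\varepsilon^y_{p+1}|] + \sqrt{\Delta t}\,\mathbb{E}^x_0[|\varepsilon^z_{p+1}|]\bigr) + M(\Delta t)^3.
\end{equation*}
A discrete Gr\"onwall argument, anchored by the assumed terminal bounds $\mathbb{E}^x_{P-1}[|\varepsilon^y_P|], \mathbb{E}^x_{P-1}[|\varepsilon^z_P|] = O((\Delta t)^3)$, sums the $P = T/\Delta t$ local contributions and gives $O((\Delta t)^2)$ globally, which is the claimed bound. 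The constant $M$ inherits dependence on $T$, the Lipschitz constant of $f$, and the smoothness constants of $f$ and $g$ used in Step~2.
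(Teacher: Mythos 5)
The paper does not actually prove this theorem: it is imported verbatim from Ruijter and Oosterlee (Theorem 1 of the cited reference), and the text explicitly defers the proof there. So there is no in-paper argument to compare against; I can only assess your sketch against the standard proof of that result. Your plan is the right one and is essentially the route taken in the cited source: exact backward relations for $Y_{t_p}$ and $\Delta t\,Z_{t_p}$, a trapezoidal local truncation error of order $(\Delta t)^3$, and a discrete Gr\"onwall induction that accumulates $P=T/\Delta t$ local errors into a global $(\Delta t)^2$ bound. You also correctly identify the genuine obstacle, namely that the $C^2$-in-$\tau$ smoothness of $\tau\mapsto\mathbb{E}_p[f(\tau,{\bf X}_\tau)]$ and $\tau\mapsto\mathbb{E}_p[f(\tau,{\bf X}_\tau)\Delta\omega_{p+1}]$ requires regularity of $f$ and $g$ well beyond the paper's standing assumptions (A1)--(A4); the paper itself acknowledges that these extra conditions are not part of its hypotheses.

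Two points in your Step 3 need more care than the sketch suggests. First, the $z$-recursion \eqref{discrtized_z} carries a $1/(\theta_2\Delta t)$ prefactor, so a $(\Delta t)^3$ truncation error in the relation for $\Delta t\,Z_{t_p}$ becomes $(\Delta t)^2$ per step in $\varepsilon^z_p$ itself; the $\sqrt{\Delta t}$ weight in the combined quantity is precisely what absorbs this amplification, and you should make that bookkeeping explicit or the claimed recursion with remainder $M(\Delta t)^3$ will not close. Second, the conditional Cauchy--Schwarz step $|\mathbb{E}_p[\varepsilon^y_{p+1}\Delta\omega_{p+1}]|\leq\sqrt{\Delta t}\,(\mathbb{E}_p[|\varepsilon^y_{p+1}|^2])^{1/2}$ produces $L^2$-type quantities on the right while your induction is phrased in $L^1$; the standard fix is to run the entire Gr\"onwall argument in $L^2$ (or to use a martingale-representation/Malliavin identity for $\mathbb{E}_p[\varepsilon^y_{p+1}\Delta\omega_{p+1}]$) and deduce the stated $L^1$ bound at the end. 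Neither issue is fatal, but as written the coupled inequality in Step 3 does not follow from the ingredients you list.
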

For general drift and diffusion coefficients, we may only have {\it first-order weak convergence} for the Euler discretization of forward process $X$ and it may become the dominating error of our algorithm. For the proof of Theorem \ref{discretization_convergence} and other convergence results, readers are referred to \cite{ruijter_oosterlee_2015} and the references therein. 
 
\subsection{Error in SWIFT formulas}\label{error_in_SWIFT_formulas}
In this subsection, we shall discuss the error of the numerical approximation by the SWIFT-type formulas. In order for our formula to be applicable for both the one-step approximation and the recursive case, we adopt the following setting. 

Consider an expectation $\mathbb{E}[v(X^\Delta_{t+\Delta t})|X_t^\Delta = x]$ for a function $v$ defined on $\mathbb{R}$ and assume that $v$ is continuous with all its left- and right-side derivatives well-defined, we define our expectation approximation as
\begin{equation}
\hat{\mathbb{E}}[v(X^\Delta_{t+\Delta t})|X^{\Delta}_t = x]:=\frac{1}{J}\sum^{J}_{r=1-J}\rho_v\left(\frac{r}{2^m}\right)\mathbb{E}[\varphi_{J,r}(X^\Delta_{t+\Delta t})|X^\Delta_t=x],\label{eq:approximated_expectation}
\end{equation}
where $\{\rho_v(2^{-m}r)\}_{r=1-J,\ldots,J}$ is an  approximation vector related to function $v$, to be defined.
For any function $v:\mathbb{R}\rightarrow\mathbb{R}$ and given range $(-2^{-m}J,2^{-m}J]$, we associate $v$ with {\it an alternating extension} defined below. 
\begin{definition}
An {\it alternating extension of a function $v$}, denoted by $\tilde{v}$, is a function defined on $\mathbb{R}$ such that it satisfies:
\begin{enumerate}
\item[(a)] $\tilde{v}(x) = v(x) \quad\forall x \in (-2^{-m}J,2^{-m}J];$
\item[(b)] $\tilde{v}(x + 2^{1-m}J) = -\tilde{v}(x) 
			 \quad \forall x \in \mathbb{R}.$ 
\end{enumerate}
\end{definition}
The difference between the approximation value and the true value is given by 
\begin{align}
&
\mathbb{E}[v(X^\Delta_{t+\Delta t})|X_t^\Delta = x] - \hat{\mathbb{E}}[v(X^\Delta_{t+\Delta t})|X^{\Delta}_t = x]\nonumber\\
=&
\mathbb{E}[v(X^\Delta_{t+\Delta t})|X^\Delta_t = x] - \mathbb{E}[\tilde{v}(X^\Delta_{t+\Delta t})|X^\Delta_t = x]
+ \mathbb{E}[\tilde{v}(X^\Delta_{t+\Delta t})|X^\Delta_t = x] - \mathbb{E}[H_{V_J}v(X^\Delta_{t+\Delta t})|X^\Delta_t = x] \nonumber\\
+ &
 \mathbb{E}\left[
	\left.
		\frac{1}{J}\sum^J_{r=1-J}
		<H_{V_J} v, \varphi_{J,r}>
		\varphi_{J,r}(X^\Delta_{t+\Delta t})
	\right|X^\Delta_t = x
\right]
- \frac{1}{J}\sum^{J}_{r=1-J}\rho_v\left(\frac{r}{2^m}\right)\mathbb{E}[\varphi_{J,r}(X^\Delta_{t+\Delta t})|X^\Delta_t=x]\nonumber\\
= &
\mathbb{E}[v(X^\Delta_{t+\Delta t}){\bf 1}_{\{X^\Delta_{t+\Delta t }\notin (-2^{-m}J,2^{-m}J]\}}|X^\Delta_t = x]
- \mathbb{E}[\tilde{v}(X^\Delta_{t+\Delta t}){\bf 1}_{\{X^\Delta_{t+\Delta t }\notin (-2^{-m}J,2^{-m}J]\}}|X^\Delta_t = x]\nonumber\\
+ & 
 \mathbb{E}[\tilde{v}(X^\Delta_{t+\Delta t})- H_{V_J}v(X^\Delta_{t+\Delta t})|X^\Delta_t = x] 
+ \frac{1}{J}\sum^J_{r=1-J}\left(
	H_{V_J} v \left(\frac{r}{2^m}\right)
	- \rho_v\left(\frac{r}{2^m}\right)
\right)\mathbb{E}[\varphi_{J,r}(X^\Delta_{t+\Delta t})|X^\Delta_t=x]
\label{SWIFT_error_formula}\\
= & 
\mathbb{E}[v(X^\Delta_{t+\Delta t}){\bf 1}_{\{X^\Delta_{t+\Delta t }\notin (-2^{-m}J,2^{-m}J]\}}|X^\Delta_t = x] 
- \mathbb{E}[\tilde{v}(X^\Delta_{t+\Delta t}){\bf 1}_{\{X^\Delta_{t+\Delta t }\notin (-2^{-m}J,2^{-m}J]\}}|X^\Delta_t = x]\nonumber\\
+ &
 \mathbb{E}[\tilde{v}(X^\Delta_{t+\Delta t})- H_{V_J}v(X^\Delta_{t+\Delta t})|X^\Delta_t = x] 
+ 
 \frac{1}{J}\sum^J_{r=1-J}\left(
	H_{V_J} v \left(\frac{r}{2^m}\right)
	- v\left(\frac{r}{2^m}\right)
\right)\mathbb{E}[\varphi_{J,r}(X^\Delta_{t+\Delta t})|X^\Delta_t=x]\nonumber\\
+ &
 \frac{1}{J}\sum^J_{r=1-J}\left(
	v \left(\frac{r}{2^m}\right)
	- \rho_v\left(\frac{r}{2^m}\right)
\right)\mathbb{E}[\varphi_{J,r}(X^\Delta_{t+\Delta t})|X^\Delta_t=x].
\label{error_formula}
\end{align}
We derive the above formula by telescoping and using the properties of the scaling function. 
By taking absolute values on both sides of Equation (\ref{error_formula}), we get a simple bound for the approximation error:
\begin{align}
&
\left|\mathbb{E}[v(X^\Delta_{t+\Delta t})|X^\Delta_t = x] - \hat{\mathbb{E}}[v(X^\Delta_{t+\Delta t})|X^{\Delta}_t = x]\right|\nonumber\\
\leq & \;
|\mathbb{E}[v(X^\Delta_{t+\Delta t}){\bf 1}_{\{X^\Delta_{t+\Delta t }\notin (-2^{-m}J,2^{-m}J]\}}|X^\Delta_t = x]|
+ \mathbb{E}[|\tilde{v}(X^\Delta_{t+\Delta t})|{\bf 1}_{\{X^\Delta_{t+\Delta t }\notin (-2^{-m}J,2^{-m}J]\}}|X^\Delta_t = x]\nonumber\\
+ & 
 \mathbb{E}[|\tilde{v}(X^\Delta_{t+\Delta t})- H_{V_J}v(X^\Delta_{t+\Delta t})|\,|X^\Delta_t = x] 
+ 
 \frac{1}{J}\sum^J_{r=1-J}\left|
	H_{V_J} v \left(\frac{r}{2^m}\right)
	- \tilde{v}\left(\frac{r}{2^m}\right)
\right|\,|\mathbb{E}[\varphi_{J,r}(X^\Delta_{t+\Delta t})|X^\Delta_t=x]|\nonumber\\
+ &
 \frac{1}{J}\sum^J_{r=1-J}\left|
	v \left(\frac{r}{2^m}\right)
	- \rho_v\left(\frac{r}{2^m}\right)
\right|\,|\mathbb{E}[\varphi_{J,r}(X^\Delta_{t+\Delta t})|X^\Delta_t=x]|.\nonumber
\end{align}

Errors of a SWIFT-type method can be separated into four (in Equation (\ref{SWIFT_error_formula})) or five (in Equation (\ref{error_formula})) parts and they will be discussed one by one.
Note that the first three terms in Equation (\ref{SWIFT_error_formula}) and (\ref{error_formula}) are the same.

The first error term is related to the tail behaviour of the probability measure and function $v$. 
It is finite, as otherwise the original expectation would be infinite. 
Also, its value should decrease (heuristically speaking, not in strict mathematical sense) when a wider computational domain is used. 
Assuming $v$ to be uniformly bounded by a number $M$, this term is bounded by $M\cdot\mathbb{P}(X^\Delta_{t+\Delta t }\notin (-2^{-m}J,2^{-m}J]|X^\Delta_t = x)$. 
Similarly, the second term is related to the tail probability. 
Because of the continuity of $v$ and the periodicity of $\tilde{v}$, $\tilde{v}$ is uniformly bounded by some number $M'$ and the second error term is bounded by $M'\cdot\mathbb{P}(X^\Delta_{t+\Delta t }\notin (-2^{-m}J,2^{-m}J]|X^\Delta_t = x)$.

The third part is related to the projection error on $V_J$. From the assumption that $v$ is continuous with all left- and right-side derivatives of $v$ existing, $\tilde{v}=\lim_{J\rightarrow \infty} H_{V_J}v$, a.e.. This can be shown by adopting the classical Dirichlet's kernel argument. Applying the dominated convergence theorem,
\begin{align*}
\mathbb{E}[\tilde{v}(X^\Delta_{t+\Delta t})- H_{V_J}v(X^\Delta_{t+\Delta t})\,|X^\Delta_t = x]
= & 
\sum^\infty_{k=J+1}
<v,\cos(2^m C_j\cdot)> 
\mathbb{E}[\cos(2^mC_jX^\Delta_{t+\Delta t})|X^\Delta_t =x]\nonumber\\
+ &
\sum^\infty_{k=J+1}
<v,\sin(2^m C_j\cdot)> 
\mathbb{E}[\sin(2^mC_jX^\Delta_{t+\Delta t})|X^\Delta_t =x].
\end{align*} 
Note that in this part, we only require $\tilde{v}=\lim_{J\rightarrow \infty} H_{V_J}v$, a.e., so that we can relax the requirement on $v$ from being continuous to being piecewise continuous. Using an integration by parts argument, if the forward process has a smooth density, this error converges exponentially with respect to $J$ but increases with the computational range. 

\begin{remark}
In fact, the projection error in the SWIFT formula can be controlled under alternative conditions. Assume that the probability density function $q$ of $X^\Delta_{t+\Delta t}|X^\Delta_t =x$, is in $L^2(\mathbb{R})$, then, 
\begin{align}
&
 |\mathbb{E}[\tilde{v}(X^\Delta_{t+\Delta t})- H_{V_J}v(X^\Delta_{t+\Delta t})|X^\Delta_t = x]
- \mathbb{E}[\tilde{v}(X^\Delta_{t+\Delta t}){\bf 1}_{\{X^\Delta_{t+\Delta t }\notin (-2^{-m}J,2^{-m}J]\}}|X^\Delta_t = x]|\nonumber\\
= & 
|\mathbb{E}[\tilde{v}(X^\Delta_{t+\Delta t})- H_{V_J}v(X^\Delta_{t+\Delta t}){\bf 1}_{\{X^\Delta_{t+\Delta t }\in (-2^{-m}J,2^{-m}J]\}}|X^\Delta_t = x]
\nonumber\\
- &
\mathbb{E}[H_{V_J}v(X^\Delta_{t+\Delta t}){\bf 1}_{\{X^\Delta_{t+\Delta t }\notin (-2^{-m}J,2^{-m}J]\}}|X^\Delta_t = x]|\nonumber\\
\leq &
||\tilde{v}- H_{V_J}v||_2
\left(\int_{\mathbb{R}}q^2(\varsigma|x) d\varsigma\right)^\frac{1}{2}
+ M_3 \mathbb{P}(X^\Delta_{t+\Delta t }\notin (-2^{-m}J,2^{-m}J]|X^\Delta_t = x),\nonumber
\end{align} 
with $M_3$ depending on the function $H_{V_J}v$. While we do not use this alternative proof in this article, it implies that the SWIFT formula can be used in a more general setting and is suitable for other applications as well.
\end{remark}

In the usual variant of the SWIFT formula, we set $\rho_v(2^{-m} r) = <v,\varphi_{J,r}>$, so the fourth term of Equation (\ref{SWIFT_error_formula}) is by definition zero and the error of applying the SWIFT formula will only consist of the first three terms. 
However, the calculation of $<v,\varphi_{J,r}>$ is difficult and time-consuming, if not impossible in practice, especially in the recursive case. 
Therefore, we propose picking $\rho_v(2^{-m} r)= \hat{v} (2^{-m} r)$, {\em an approximation of the original function $v$}. 
While it will introduce an extra error, we shall demonstrate that this error can be controlled and that the computation is much simpler. 

For the sum in the fourth term of Equation \eqref{error_formula}, we need to consider two cases.
When $r \neq J$, the pointwise convergence of $H_{V_J}v$ guarantees that 
$\left|
	H_{V_J} v \left(\frac{r}{2^m}\right)
	- \tilde{v}\left(\frac{r}{2^m}\right)
\right|\rightarrow 0$.
Therefore, these terms are bounded when $J$ is large enough.
When $r=J$, it is likely that $\tilde{v}$ is discontinuous at $2^{-m} J$ and the above argument does not hold.
However, we note that this error term is also a weighted sum of $H_{V_J}v(2^{-m} r) - v(2^{-m} r)$, with the weight given by $\frac{1}{J}\mathbb{E}[\varphi_{J,r}(X^\Delta_{t+\Delta t})|X^\Delta_t=x]$. 
Assume that $\mathbb{P}(X^\Delta_{t+\Delta t}\not\in (\lambda-b,\lambda+b)) < \epsilon_1$ and $\frac{1}{J}\varphi_{J,J}(x) < \epsilon_2$, when $x\in(\lambda-b,\lambda+b)$, for some positive numbers $b, \epsilon_1$ and $\epsilon_2$ and some number $\lambda$, then
\begin{align}
\frac{1}{J}\mathbb{E}[\varphi_{J,r}(X^\Delta_{t+\Delta t})|X^\Delta_t=x]
< & 
\mathbb{P}(X^\Delta_{t+\Delta t}\not\in (\lambda-b,\lambda+b))
+\epsilon_2 \mathbb{P}(X^\Delta_{t+\Delta t}\in (\lambda-b,\lambda+b))
< 
\epsilon_1 + \epsilon_2.\nonumber
\end{align}
These assumptions are satisfied when the distribution of $X^\Delta$ is centered around a point $\lambda$, which is true with a diffusion process whose diffusion coefficient is small, the computational range is sufficiently large  so that $\lambda$ is far away from the boundary, and the wavelet order is sufficiently high. 
If these conditions are met, the weight for the term $\left|
	H_{V_J} v \left(\frac{J}{2^m}\right)
	- \tilde{v}\left(\frac{J}{2^m}\right)
\right|$ is small and the weighted term can be bounded.
By combining the two arguments above, we can bound this error term when the computational range is sufficiently large and the wavelet order is sufficiently high.

In the one-step case, we pick $\rho_{v}=v$. Equation \eqref{SWIFT_error_formula} along with the above analysis covers all approximation errors. 

In the backward recursive case, we can use Equation \eqref{error_formula} to study the error propagation at each time step. 
For example, in our BSDE algorithm, we let $v = y^\Delta_{p+1}$ or $z^\Delta_{p+1}$ and $\rho_{v}= \hat{y}^\Delta_{p+1}$ or $\hat{z}^\Delta_{p+1}$.
In these cases, the fifth term of Equation \eqref{error_formula} is comparing our approximation with the true value at the next time step.
The error accumulates in a recursive way by applying this error analysis from time step $t_0$ to $t_{p-1}$.
Further discussion of the error propagation will be given in Section \ref{errors of the recursive scheme}.

The error for approximating $\mathbb{E}[v(X^\Delta_{t+\Delta t})\Delta\omega_{p+1}|X^\Delta_t=x]$ with 
\begin{equation}
\hat{\mathbb{E}}[v(X^\Delta_{t+\Delta t})(\omega_{t+\Delta_t}-\omega_t)|X^{\Delta}_t = x]
:=
\frac{1}{J}\sum^{J}_{r=1-J}\rho_v\left(\frac{r}{2^m}\right)
\mathbb{E}[\varphi_{J,r}(X^\Delta_{t+\Delta t})(\omega_{t+\Delta}-\omega_t)|X^\Delta_t=x],
\label{eq:approximated_expectation_with_omega}
\end{equation}
can be studied in a similar way.

\begin{remark}
It is clear from the derivation that the assumption of the function $v$ being continuous with left- and right-side derivatives is crucial in applying the quick version of the SWIFT formula. 
In our FBSDE algorithm, the functions $y$ and $z$ at intermediate time points, $p=1,\ldots, P-1$, satisfy the above conditions. 
This can be observed from Equations (\ref{function_z}) and (\ref{function_y}). 
However, we may still face an issue at the terminal time, as $D_x g$ may contain discontinuities. 
{\it We propose a mixed algorithm to deal with this situation.} 
At the terminal time, the usual SWIFT formulas are used and then the algorithm switches to the quick version in all subsequent time steps, see Algorithm~2.
\end{remark}

\begin{algorithm}[h]
\Begin{
	\For{$s = 1-J$ to $J$}{
		$\hat{y}^\Delta_{P}(2^{-m} s) = <g,\varphi_{J,s}>$, 
		$\hat{z}^\Delta_P(2^{-m} s) = <\sigma D_x g, \varphi
		_{J,s}>$ and \\ $\hat{f}^\Delta_P (2^{-m} s) = <f(T,\cdot, g(\cdot),\sigma D_x(\cdot)),\varphi_{J,r}>.$ 
	}
	Compute $(\mathbb{E}^{2^{-m}r}_{P-1}[\varphi_{J,k}(X^\Delta_{t_P})])_{r,k=1-J,\ldots,J}$ and $(\mathbb{E}^{2^{-m} r}_{P-1}[\varphi_{J,k}(X^\Delta_{t_P})\Delta\omega_P])_{r,k=1-J,\ldots,J}$ with \eqref{characteristic_SWIFT_basis} and \eqref{characteristic_SWIFT_Brownian}.\\
	\For{$p=P-1$ to 1}{
		Compute the function $(\hat{z}^\Delta_p(2^{-m}s))_{s=1-J,\ldots, J}$ with (\ref{function_z}).\\
		Compute the function $(\hat{y}^\Delta_p(2^{-m} s))_{s=1-J,\dots, J}$ with (\ref{function_y}) and Picard iterations if necessary.\\
		Compute the functions $(f(t_p,2^{-m}s,\hat{y}^\Delta_p(2^{-m}s),\hat{z}^\Delta_p(2^{-m}s)))_{s=1-J,\ldots,J}$.\\
		Compute $(\mathbb{E}^{2^{-m}r}_{p-1}[\varphi_{J,k}(X^\Delta_{t_p})])_{r,k=1-J,\ldots,J}$ and $(\mathbb{E}^{2^{-m} r}_{p-1}[\varphi_{J,k}(X^\Delta_{t_p})\Delta\omega_p])_{r,k=1-J,\ldots,J}$ with (\ref{characteristic_SWIFT_basis}) and (\ref{characteristic_SWIFT_Brownian}) if the distribution of $(X^\Delta_{t_p}-X^\Delta_{t_{p-1}})$ is time-dependent.\\
	}
	Compute $\hat{z}^\Delta_0(x_0)$ and $\hat{y}^\Delta_0(x_0)$.
}	
\caption{Mixed quick SWIFT method.}
\end{algorithm}

\subsection{Picard iteration error}
\label{picard_iterations_error}

When $\theta_1\neq 0$, a Picard iteration must be performed with the equation:
\begin{equation*}
y = \Delta t \theta_1 f(t_p,x,y,\hat{z}^\Delta_p(x)) + \hat{h}(t_p,x),
\end{equation*} 
to find the fixed point $y$. It is well-known that this iterative algorithm will converge to the unique fixed point if the function $\Delta t \theta_1 f$ is a contraction map of $y$, namely, 
\begin{equation*}
|\Delta t \theta_1 f(t_p,x,y_1,\hat{z}^\Delta_p(x))-\Delta t \theta_1 f(t_p,x,y_2,\hat{z}^\Delta_p(x))|\leq \xi|y_1-y_2|,
\end{equation*}
with $\xi\in[0,1)$ for all $x\in(-2^{-m}J,2^{-m}J]$. This condition is satisfied when the driver function is Lipschitz in $y$ and $\Delta t$ is small enough. 

We adopt the following notation:
\begin{equation*}
\left\{
	\begin{array}{l}
		\hat{y}^{\Delta,I}_{P}(x) := g(x);\\
		\displaystyle \hat{y}^{\Delta,0}_p(x) := 
		\frac{1}{J}\sum^J_{r=1-J}\hat{y}^{\Delta,I}_{p+1}\left(\frac{r}{2^m}\right)
		\Re \left\{\sum^J_{k=1}e^{\imath 2^m C_k x}\Phi(t_p,x,2^mC_k)e^{-\imath C_k r}\right\};\\
		\hat{y}^{\Delta,i+1}_p(x) :=
		 \Delta t \theta_1 f(t_p, x, \hat{y}^{\Delta,i}_p(x), \hat{z}^\Delta_p (x)) + \hat{h}(t_p,x),  
	\end{array}
\right.
\end{equation*}
for $p = 0, \ldots, P-1$ and $i = 0, \ldots, I-1$.
It is clear that $\hat{y}^{\Delta,I}_p(x) = \hat{h}(t_p,x)$ when $\theta_1 = 0$ and $I\geq 1$, which is the explicit scheme.  
The above notations are consistent with the notations in Section \ref{quick_SWIFT_approximations_of_function_y} except we should replace the $\hat{y}^\Delta_{p+1}$ with $\hat{y}^{\Delta,I}_{p+1}$ in equation \eqref{function_h}. 
Furthermore, for any given $x$, we know by definition that $y^\Delta_p(x)$ is the unique fixed point that satisfies 
\begin{equation*}
	y = \Delta t \theta_1 f(t_p, x,y, z^{\Delta}_p(x))+h(t_p,x).
\end{equation*}
Note that the notation $\hat{y}^\Delta_p$ was defined by Equation \eqref{function_y}.
   
With the above notations and given the extra information that $f$ is Lipschitz with respect to $z$, we can derive the one-step approximation error for function $y^\Delta$:
\begin{align*}
&
|\hat{y}^{\Delta, I}_p(x) - y^\Delta_p(x)| 
\leq |\hat{y}^{\Delta,I}_p(x)-\hat{y}^{\Delta}_p(x)| + |\hat{y}^\Delta_p(x) - y^\Delta_p(x)|\nonumber\\ 
\leq &
\varepsilon^{Picard}_p+\Delta t \theta_1 |f(t_p,x,\hat{y}^\Delta_p(x),\hat{z}^\Delta_p(x))-f(t_p,x,y^\Delta_p(x), z^\Delta_p(x))|+|\hat{h}(t_p,x)-h(t_p,x)|\nonumber\\
\leq &
\varepsilon^{Picard}_p+\xi |\hat{y}^\Delta_p(x)-y^\Delta_p(x)|
+ \xi|\hat{z}^\Delta_p(x)-z^\Delta_p(x)|+|\hat{h}(t_p,x)-h(t_p,x)|\nonumber\\
\leq &
(1+\xi)\varepsilon^{Picard}_p+\xi|\hat{y}^{\Delta,I}_p(x)-y^\Delta_p(x)|
+ \xi|\hat{z}^\Delta_p(x)-z^\Delta_p(x)|+|\hat{h}(t_p,x)-h(t_p,x)|.
\end{align*}
The term $\varepsilon^{Picard}_p:=|\hat{y}^{\Delta, I}_p(x) - \hat{y}^\Delta_p(x)|$ is the error of applying Picard iterations, which depends on $\Delta t$ and the Lipschitz coefficient of $f$ with respect to $y$, as stated before in this section. 
The constant $M$ is related to the Lipschitz coefficient of $f$ from standing assumption (A3) and $\xi:=M\Delta t \theta_1 \leq 1$.
The last inequality is due to a telescoping argument of the term $ |\hat{y}^\Delta_p(x)-y^\Delta_p(x)|$.
Rearranging the terms gives us the following error bound: 
\begin{equation}
|\hat{y}^{\Delta, I}_p(x) - y^\Delta_p(x)| 
\leq\frac{1+\xi}{1-\xi}\varepsilon^{Picard}_p+\frac{1}{1-\xi}(\xi|\hat{z}^\Delta_p(x)-z^\Delta_p(x)|+|\hat{h}(t_p,x)-h(t_p,x)|).
\label{picard_iteration_error}
\end{equation}

\subsection{The errors of the FBSDE recursive scheme}
\label{errors of the recursive scheme} 

Given an FBSDE system, a time partition and a discretization scheme,  we may apply the result in Section \ref{error_in_SWIFT_formulas} to derive a local approximation error for the related expectations. 
When we are  approximating expectations with the functions $y^\Delta_{p+1}$ and $z^\Delta_{p+1}$ in the BSDE scheme, the approximation vector is given by 
\begin{equation*}
\left\{
	\begin{array}{l}
		\rho_{y^\Delta_{p+1}} = \{\hat{y}^{\Delta,I}_{p+1}(2^{-m}r)\}_{r=1-J,\ldots, J};\\
		\rho_{z^\Delta_{p+1}} = \{\hat{z}^\Delta_{p+1}(2^{-m}r)\}_{r=1-J,\ldots, J},
	\end{array}
\right.
\end{equation*}
for $p= 0, \ldots, P-2$. 
At the terminal time $t_P=T$, they are defined as
\begin{equation*}
\left\{
	\begin{array}{l}
		\rho_{y^\Delta_P} = \{Y^\Delta_T|X^\Delta_T=2^{-m}r\}_{r=1-J,\ldots, J};\\
		\rho_{z^\Delta_P} = \{Z^\Delta_T|X^\Delta_T=2^{-m}r\}_{r=1-J,\ldots, J},
	\end{array}
\right.
\end{equation*}
or 
\begin{equation*}
\left\{
	\begin{array}{l}
		\rho_{y^\Delta_P} = \{<Y^\Delta_T,\varphi_{J,r}>\}_{r=1-J,\ldots, J};\\
		\rho_{z^\Delta_P} = \{<Z^\Delta_T,\varphi_{J,r}>\}_{r=1-J,\ldots, J},
	\end{array}
\right.
\end{equation*}
depending on the scheme we used.
When approximating expectations involving  $f(t_{p+1},x, y^{\Delta}_{p+1}(x), z^\Delta_{p+1}(x))$, the approximation vector $\rho_{f_{p+1}} = \{f(t_{p+1},2^{-m}r, \rho_{{y}^\Delta_{p+1}}(2^{-m}r), \rho_{z^\Delta_{p+1}}(2^{-m}r)\}_{r=1-J,\ldots,J}$, for $p = 0, \ldots P-1$.

From Equation \eqref{error_formula}, we know that the approximation error for the SWIFT formula consists of four parts. 
Therefore, the local approximation errors at point $(t,x)$ by the SWIFT formula for any function $v$, denoted as $\zeta^{m,J}_{v}(t_p,x)$ or $\zeta^{m,J,\omega}_{v}(t_p,x)$ for the two types of expectation, are given by  
\begin{align}
\zeta^{m,J}_{v}(t_p,x) :=  &
\mathbb{E}^x_p[v(X^\Delta_{t_{p+1}}){\bf 1}_{\{X^\Delta_{t_{p+1}}\notin (-2^{-m}J,2^{-m}J]\}}]
- \mathbb{E}^x_p[\tilde{v}(X^\Delta_{t_{p+1}}){\bf 1}_{\{X^\Delta_{t_{p+1}}\notin (-2^{-m}J,2^{-m}J]\}}]\nonumber\\
+ & 
\mathbb{E}^x_p[\tilde{v}(X^\Delta_{t_{p+1}})- H_{V_J}v(X^\Delta_{t_{p+1}})] 
+ \frac{1}{J}\sum^J_{r=1-J}\left(
	H_{V_J} v \left(\frac{r}{2^m}\right)
	- v\left(\frac{r}{2^m}\right)
\right)\mathbb{E}^x_p[\varphi_{J,r}(X^\Delta_{t_{p+1}})];\nonumber\\
\zeta^{m,J,\omega}_{v}(t_p,x) : =  &
\mathbb{E}^x_p[v(X^\Delta_{t_{p+1}}){\bf 1}_{\{X^\Delta_{t_{p+1}}\notin (-2^{-m}J,2^{-m}J]\}}\Delta \omega_{p+1}] 
- \mathbb{E}^x_p[\tilde{v}(X^\Delta_{t_{p+1}}){\bf 1}_{\{X^\Delta_{t_{p+1}}\notin (-2^{-m}J,2^{-m}J]\}}\Delta\omega_{p+1}]\nonumber\\
+ & 
\mathbb{E}^x_p[(\tilde{v}(X^\Delta_{t_{p+1}})- H_{V_J}v(X^\Delta_{t_{p+1}}))\Delta\omega_{p+1}] 
+ \frac{1}{J}\sum^J_{r=1-J}\left(
	H_{V_J} v \left(\frac{r}{2^m}\right)
	- v\left(\frac{r}{2^m}\right)
\right)\mathbb{E}^x_p[\varphi_{J,r}(X^\Delta_{t_{p+1}})\Delta\omega_{p+1}].\nonumber
\end{align}

Applying all the results above and the standing assumptions, we can derive the recurring error formulas of our SWIFT BSDE scheme:
\begin{align}
\frac{1}{M_1}|z^\Delta_p(x)-\hat{z}^\Delta_p(x)|
\leq & 
|\zeta^{m,J}_{z^\Delta_{p+1}}(t_p,x)|
+|\zeta^{m,J,\omega}_{y^\Delta_{p+1}}(t_p,x)|
+|\zeta^{m,J,\omega}_{f_{p+1}}(t_p,x)|
\nonumber\\
+ &
\frac{1}{J}\sum^J_{r=1-J}\left|
	z^\Delta_{p+1} \left(\frac{r}{2^m}\right)
	- \rho_{z^\Delta_{p+1}}\left(\frac{r}{2^m}\right)
\right|(|\mathbb{E}^x_p[\varphi_{J,r}(X^\Delta_{t_{p+1}})]|+\mathbb{E}^x_p[\varphi_{J,r}(X^\Delta_{t_{p+1}})\Delta\omega_{p+1}]|)\nonumber\\
+ &
\frac{1}{J}\sum^J_{r=1-J}\left|
	y^\Delta_{p+1} \left(\frac{r}{2^m}\right)
	- \rho_{y^\Delta_{p+1}}\left(\frac{r}{2^m}\right)
\right|\,|\mathbb{E}^x_p[\varphi_{J,r}(X^\Delta_{t_{p+1}})\Delta\omega_{p+1}]|,\label{recurring_z}
\end{align}
and 
\begin{align}
\frac{1}{M_2}|y^\Delta_p(x)-\hat{y}^{\Delta,I}_p(x)|
\leq & 
M_3
+ |\zeta^{m,J}_{z^\Delta_{p+!}}(t_p,x)|
+|\zeta^{m,J,\omega}_{y^\Delta_{p+1}}(t_p,x)|
+|\zeta^{m,J,\omega}_{f_{p+1}}(t_p,x)|
+|\zeta^{m,J}_{y^\Delta_{p+1}}(t_p,x)|
+|\zeta^{m,J}_{f_{p+1}}(t_p,x)|\nonumber\\
+ &
\frac{1}{J}\sum^J_{r=1-J}\left|
	z^\Delta_{p+1} \left(\frac{r}{2^m}\right)
	- \hat{z}^\Delta_{p+1}\left(\frac{r}{2^m}\right)
\right|(|\mathbb{E}^x_p[\varphi_{J,r}(X^\Delta_{t_{p+1}})]|+\mathbb{E}^x_p[\varphi_{J,r}(X^\Delta_{t_{p+1}})\Delta\omega_{p+1}]|)\nonumber\\
+ &
\frac{1}{J}\sum^J_{r=1-J}\left|
	y^\Delta_{p+1} \left(\frac{r}{2^m}\right)
	- \hat{y}^\Delta_{p+1}\left(\frac{r}{2^m}\right)
\right|(|\mathbb{E}^x_p[\varphi_{J,r}(X^\Delta_{t_{p+1}})]|+\mathbb{E}^x_p[\varphi_{J,r}(X^\Delta_{t_{p+1}})\Delta\omega_{p+1}]|),\label{recurring_y}
\end{align}
with constants $M_1$, $M_2$ and $M_3$ depending on the underlying FBSDE, the discretization scheme and the Picard iteration errors, but not depending on $m$ and $J$.
Their proof is left for the appendix.

\subsubsection{Error bound and choice of parameters}
\label{SWIFT_parameters}

An error bound at $(0, x_0)$ for applying the SWIFT scheme to a FBSDE system can be found by repeatedly applying Equations \eqref{recurring_z} and \eqref{recurring_y}.
This bound is given by the weighted sum of the local approximation errors for each point in the grid $\{(0,x_0)\}\cup\{(t_p,2^{-m}r)|t=1,\ldots, P \mbox{ and } r=1-J,\ldots, J\}$, with the weight being $1$ for $(0,x_0)$ and the weight being
\begin{equation}
\sum_{u\in\upsilon_p}\frac{1}{J^p}\prod^p_{l=1}(|\mathbb{E}^{u_{l-1}}_l[\varphi_{J,u_l}(X^\Delta_{t_l})]|+|\mathbb{E}^{u_{l-1}}_l[\varphi_{J,u_{l}}(X^\Delta_{t_l})\Delta\omega_{l+1}]|), 
\label{eq: weight_of_error_bound}
\end{equation}
for $\{(t_p,2^{-m}r)|t=1,\ldots, P \mbox{ and } r=1-J,\ldots, J\}$, where $\upsilon_p$ is the set containing length $p+1$ vectors $u=(u_0, u_1,\ldots, u_p)$, with the first element $u_0=x_0$ and other elements equal to $2^{-m}r$ for $r=1-J,\ldots, J$. 
A simple example of deriving such an error bound is included in the appendix.

However, since this error bound uses local approximation errors from multiple points in a grid, actually calculating the error bound would be costly.
The weight and the local error behave differently at different grid points.
Whenever $|r|$ is small, the local error converges exponentially in numerical tests with fixed $2^{-m}J$ and increasing $J$, but it may fail to converge to zero when $|r|$ is close to $J$.
On the other hand, when $|r|$ is close to $J$, the weight in Equation \eqref{eq: weight_of_error_bound} tends to zero quickly and reduces the error.
We do not have a simple formula to describe this balance.
Last but not least, parameter $P$, the number of time points, affects the total number of terms in the error bound, the value of the local error and the value of the weight in Equation \eqref{eq: weight_of_error_bound}.
It is highly non-trivial to quantify the effect of $P$ on the overall error from this error bound. 

In practice, we would recommend a three-step approach to select the parameters for the scheme and get a global picture of what the error may be.
First of all, pick parameter $P$ based on the discretization error for $(X,Y,Z)$.
This can be done either through the error bound in Section \ref{discretization_error_of_the_BSDE} or other existing error bounds in the literature.
The reason is that $m$ and $J$ have no effect on the discretization error while $P$ affects each part of the approximation error.
Next, we should choose our parameters $J$ and $m$ according to error formula \eqref{error_formula}. 
The interest is in the third term in the equation, which increases with the truncation range but decreases with the wavelet order $J$. 
Therefore we should first fix the truncation range $2^{-m} J$ to a constant value $a$ in our scheme, the tail probability outside our computational range is below a fixed tolerance level.
This can be done heuristically by considering the cumulants of $X_T$, see \cite{fang_oosterlee_2009_original_COS}.
Finally, we pick a $J$ value such that the overall error converges and adjust $m$ accordingly so that the truncation range remains the same.
This approach is very useful for applications (compared to the error bound itself).   

\subsection{Computational Complexity} 

At each time-step $t_p$, the following operations have to be performed:
\begin{itemize}
\item {\em Computation} of $\mathbb{E}^x_p[\varphi_{J,k}(X^\Delta_{t_{p+1}})]$ and $\mathbb{E}^x_p[\varphi_{J,k}(X^\Delta_{t_{p+1}})\Delta\omega_{p+1}]$ by the FFT algorithm, in ${\rm O}(J\log(J))$ operations. It is calculated only once at the terminal time-step if the characteristic function of $X^\Delta$ does not depend on the time point;

\item {\em Computation} of $\hat{z}^\Delta_p(x)$, $\hat{h}(t_p,x)$ and $\hat{y}^{\Delta,0}_p(x)$ by matrix-vector multiplications, in $\mathcal{O}(J^2)$ operations;

\item {\em Computation} of $\hat{y}^{\Delta,I}$ by I Picard iterations on an $x$-grid, in $\mathcal{O}(I J)$ operations;

\item {\em Evaluation} of $f(t_p,x,\hat{y}^{\Delta,I}_p(x), \hat{z}^{\Delta}_p(x))$ in $\mathcal{O}(J)$ operations.
\end{itemize}
The most time-consuming part in our algorithm is the matrix-vector multiplication. The proposed algorithms have linear computational complexity with respect to the time-steps $P$ and the starting evaluation at terminal time is of order $\mathcal{O}(J)$. In total, the complexity of the SWIFT-type methods is $\mathcal{O}(J+P[J^2+ I J +J\log(J) +J])$.

\section{Antireflective boundary}\label{section:anti-reflective_boundary}

Recalling Equation \eqref{error_formula}, the approximation of $\mathbb{E}[v(X^\Delta_{t+\Delta t})|X_t^\Delta = x]$ by the SWIFT formula may have a significant local error when two conditions are satisfied. 
The first condition being that the alternating extension $\tilde{v}$ diverges from $v$ in the range $[-\eta-2^{-m}J,-2^{-m}J]$ or $[2^{-m}J,\eta+2^{-m}J]$, for some number $\eta>0$ and the second condition being that the probability of $X^\Delta_{t+\Delta t}|X^\Delta_t=x$ in the aforementioned ranges is large. 
While the first condition is almost always true, given that $X^\Delta$ is a diffusion process, the second condition is true only when the starting point $x$ is close to the boundary $-2^{-m}J$ or $2^{-m}J$. 
Therefore, there may be intervals $(-2^{-m}J,\alpha)$ and $(\beta,2^{-m}J]$ where the SWIFT formula is inaccurate.

We propose using an antireflective boundary technique to deal with this issue. Antireflective boundary conditions form a popular technique for extrapolation in image deblurring methods. For its applications in image deblurring, the reader may refer to \cite{donatelli_serra-capizzano_2010} and the references therein. 

\begin{figure}[t!]
\centering
\setlength{\unitlength}{1.5cm}
\begin{picture}(10,1)(0,0)
\thicklines
\put(1,0.5){\vector(1,0){8}}
\thicklines
\put(9,0.5){\vector(-1,0){8}}
\put(2,0.4){\line(0,1){0.2}}
\put(2.8,0.4){\line(0,1){0.2}}
\put(3.6,0.4){\line(0,1){0.2}}
\put(6.4,0.4){\line(0,1){0.2}}
\put(7.2,0.4){\line(0,1){0.2}}
\put(8,0.4){\line(0,1){0.2}}
\put(1.6,0.1){$-2^{-m}J$}
\put(2.7,0.7){$\alpha$}
\put(3.2,0.1){$2\alpha+2^{-m}J$}
\put(6,0.1){$2\beta-2^{-m}J$}
\put(7.7,0.1){$2^{-m}J$}
\put(7.1,0.7){$\beta$}
\end{picture}
\caption{The approximation range $(-2^{-m}J, 2^{-m}J]$ and the accuracy range $[\alpha,\beta]$.}
\label{figure:anti-reflective_range}
\end{figure}
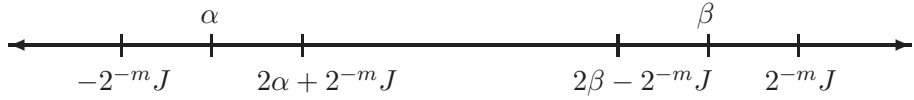

In practice, assume that we approximate a function $\vartheta(x)$ by $\hat{\vartheta}(x)$ on $(-2^{-m}J,2^{-m}J]$ and we know that the approximation is accurate for $x\in[\alpha,\beta]$, namely, $|\vartheta(x)-\hat{\vartheta}(x)|<\epsilon$, for some small positive real number $\epsilon$. 
Given the numbers $\alpha>-2^{-m}J$ and $\beta <2^{-m}J$, so that there is some inaccuracy near the boundary but $(\alpha,2\alpha+2^{-m}J),(2\beta - 2^{-m}J,\beta) \subset [\alpha,\beta]$ (see Figure \ref{figure:anti-reflective_range}). 
We would extrapolate an approximation of $\vartheta(x)$ for $x\in (-2^{-m}J,\alpha)$ and $x\in (\beta,2^{-m}J]$ by applying antireflective conditions with the accurate approximation. 
For $x\in(-2^{-m}J,2^{-m}J]$, we define
\begin{equation}
\left\{
	\begin{array}{ll}
		\hat{\vartheta}^a(x) :=
		2 \hat{\vartheta}(\alpha) -  
		\hat{\vartheta}(2\alpha-x) &
		\mbox{ for } x\in(-2^{-m}J,\alpha);\\
		\hat{\vartheta}^a(x) :=
		\hat{\vartheta}(x) &
		\mbox{ for } x\in[\alpha,\beta];\\
		\hat{\vartheta}^a(x) :=
		2\hat{\vartheta}(\beta) 
		- \hat{\vartheta}(2\beta - x)&
		\mbox{ for } x\in(\beta, 2^{-m}J), 
	\end{array}
\right.
\label{anti-reflective_formula}
\end{equation}
and use $\hat{\vartheta}^a$ instead of $\hat{\vartheta}$ as our approximation.

If $\vartheta$ is two times continuously differentiable on $\mathbb{R}$, then, by a simple application of Taylor's theorem, we have:
\begin{align}
\vartheta(x) 
= & 
\vartheta(\alpha) 
+ \frac{d\vartheta}{dx}(\alpha)(x-\alpha) 
+ \frac{1}{2}\frac{d^2\vartheta}{dx^2}(\varsigma_1)(x-\alpha)^2;\nonumber\\
\vartheta(2\alpha - x)
= &
\vartheta(\alpha)
- \frac{d\vartheta}{dx}(\alpha)(x-\alpha)
+ \frac{1}{2}\frac{d^2\vartheta}{dx^2}(\varsigma_2)(x-\alpha)^2,  \nonumber
\end{align}
where $x\in (2^{-m}J,\alpha)$, $\varsigma_1\in (x,\alpha)$ and $\varsigma_2 \in (\alpha, 2\alpha-x)$. 
The approximation error for $x\in(-2^{-m}J,\alpha)$ is then bounded by 
\begin{align}
|\vartheta(x)-\hat{\vartheta}^a(x)|
\leq &
|\vartheta(x) - 2\vartheta(\alpha) + \vartheta(2\alpha-x)|
+2|\vartheta(\alpha)-\hat{\vartheta}(\alpha)|
+|\vartheta(2\alpha - x)-\hat{\vartheta}(2\alpha-x)|\nonumber\\
\leq & 
{(-2^{-m}J-\alpha)^2}\sup_{\varsigma\in(-2^{-m}J,2\alpha+2^{-m}J)}\left|\frac{d^2\vartheta}{dx^2}(\varsigma)\right|
+3 \epsilon.\nonumber
\end{align}
A similar formula can be derived for the set $(2\beta-2^{-m}J,2^{-m}J)$. For the recursive scheme, one can just apply Equation \eqref{anti-reflective_formula} at each time-step.

\begin{remark} 
The range of accurate approximations for the SWIFT formula depends on the distribution of $X^\Delta_{t+\Delta t}|_{X^\Delta_t=x}$, and, therefore, it is model dependent. 
The performance of applying the antireflective boundary technique with the SWIFT formula depends on the smoothness of the target function with respect to starting point $x$, the accuracy of the SWIFT formula in the range $[\alpha,\beta]$ and the length of the range. 
\end{remark} 

\section{Numerical experiments}\label{section:numerical_experiments}

Several numerical studies have been performed in MATLAB 9.0.0. The computer is equipped with Intel(R) Core(TM) i5-2520M CPU @ 2.50GHz and 7.7 GB RAM. Four different discretization schemes have been used to test the effect of various choices of $\theta$ on the numerical algorithm. They are:\\ \\
\begin{tabular}[l]{ l l p{.1cm} l l}
Scheme A: & $\theta_1=0$, $\theta_2=1$, & & 
Scheme C: & $\theta_1=1$, $\theta_2=1$, \\
Scheme B: & $\theta_1=0.5$, $\theta_2=1$, & &
Scheme D: & $\theta_1=0.5$, $\theta_2=0.5$.
\end{tabular}\\ \\
The function $z^\Delta_p$ is solved explicitly in all schemes while ${y}^\Delta_p$ is solved explicitly for scheme A and implicitly with  5 Picard iterations for the other schemes. 

For each given example, we associate our numerical algorithm with the computational domain $[\kappa_1-L\sqrt{\kappa_2},\kappa_1+L\sqrt{\kappa_2}]$, where cumulants $\kappa_1 = x_0 +\mu(0,x_0) T$ and $\kappa_2 = \sigma^2(0,x_0) T$ and $L =10$.   
It is similar to the setting in \cite{ruijter_oosterlee_2015}.  The value $2^{-m} J = L\sqrt{\kappa_2}$ is a constant for each example. The value of $J$ is assumed to be $2^9$.

\subsection{Example 1}
This example has been studied in \cite{ruijter_oosterlee_2015} and is originally from \cite{zhao_li_zhang_2012}. The considered FBSDE is 
\begin{equation}
\left\{
\begin{array}{l}
dX_t =  d\omega_t,\\
dY_t = -(Y_tZ_t-Z_t+2.5Y_t-\sin(t+X_t)\cos(t+X_t)-2\sin(t+X_t))dt+Z_td\omega_t.
\end{array}
\right.
\end{equation}
We take the initial and terminal conditions $x_0 = 0$ and $Y_T = \sin(X_T+T)$.

The exact solution is given by 
\begin{equation}
(Y_t,Z_t)=(\sin(X_t+t),\cos(X_t+t)).
\end{equation}
The terminal time is set to be $T=1$ and $(Y_0,Z_0)=(0,1)$. The driver function $f$ depends on both time $t$ and current state $X_t$. 
The results for the quick SWIFT method are presented in Figure \ref{quick_SWIFT_example1} while the results for the {\em mixed SWIFT method} are presented in Figure \ref{mixed_quick_SWIFT_example1}. 
We observe that there are no significant differences between the quick SWIFT and mixed SWIFT method. For schemes A, B and C, both approximation errors for $Y_0(x_0)$ and $Z_0(x_0)$ are of ${\rm O}(\Delta t)$ order, while the errors converge with ${\rm O}((\Delta t)^2)$ for scheme D.

\begin{remark}
The driver functions for some examples in this section are not universally Lipschitz.
However, one should notice that for the contraction argument in Section \ref{picard_iterations_error} to be valid, for any fixed $z$, the driver function should be Lipschitz with respect to $y$.
All the driver functions in this section satisfy this weaker condition.
We aim for clear presentation rather than general applicability when we designed the assumptions and conditions and our method can be applied to a broader class of BSDEs than we described here.
For a more in-depth analysis of the application of Picard iterations in the numerical treatment of BSDEs, the reader is referred to \cite{gobet_lemor_warin_2005}. 
\end{remark}

\subsection{Example 2: European call option}
\label{section_example_2}

Next, we calculate the price $v(t,S_T)$ of a call option under the Black-Scholes model by solving an FBSDE. The underlying process satisfies:
\begin{equation}
dS_t = \bar{\mu} S_t dt + \bar{\sigma} S_t d\omega_t.
\end{equation}
Along the line of reasoning in \cite{ruijter_oosterlee_2015}, we assume that the financial market is complete, there are no trading restrictions and the call option can be perfectly hedged. Then $(v(t,S_t),\bar{\sigma} S_t D_s v(t,S_t))$ solves the FBSDE,
\begin{equation}
\left\{
\begin{array}{l}
dS_t =  \bar{\mu} S_t dt + \bar{\sigma} S_t d\omega_t,\\
dY_t = -(-rY_t-\frac{\bar{\mu}-r}{\bar{\sigma}}Z_t)dt+Z_td\omega_t,
\end{array}
\right.
\end{equation}
with terminal condition $Y_T=\max(S_T-K,0)$. The driver function is continuous and linear with respect to $y$ and $z$. We use the following parameter values in our test: 
\begin{equation}
S_0=100, K=100, r=0.1, \bar{\mu}=0.2, \bar{\sigma}=0.25,T=0.1.
\end{equation}
The exact solutions $Y_0 = 3.65997$ and $Z_0=14.14823$ are given by the Black-Scholes formula first shown in \cite{black_scholes_1973}. We switch to the log-asset domain $X_t = \log(S_t)$ and solve 
\begin{equation}
\left\{
\begin{array}{l}
dX_t = \left(\bar{\mu}-\frac{1}{2}\bar{\sigma}^2\right)dt +\bar{\sigma}d\omega_t,\\
dY_t = -(-rY_t-\frac{\bar{\mu}-r}{\bar{\sigma}}Z_t)dt+Z_td\omega_t,
\end{array}
\right.
\label{european_price}
\end{equation}
where $Y_T=\max(\exp(X_T)-K,0)$.

For the result of the quick SWIFT method, we refer to Figure \ref{quick_SWIFT_example2}. Immediately, we notice that the result of scheme D does not improve when increasing the number of time-steps. This is due to the discontinuity of the terminal value of $Z$ which creates a significant error. For the mixed SWIFT method, shown in Figure~\ref{mixed_quick_SWIFT_example2}, the error from the discontinuity  has been removed. The approximate values $\hat{y}^\Delta_0(x_0)$ and $\hat{z}^\Delta_0(x_0)$ converge with approximately order 1 with respect to $\Delta t$ for schemes A, B, and C and about order 2 for scheme D.

Since the driver function in Equation (\ref{european_price}) depends on $\bar{\mu}$, the approximation error also depends on $\bar{\mu}$, even though the final result $v(0,x_0)$ is unrelated to the drift. For the same number of time-steps $P$, the error increased with the increase of $\bar{\mu}$, as shown in Figure \ref{qexample2_mu}. 

The approximation algorithm can be further improved by applying antireflective boundary conditions in the recursive time-steps. 
In Figure \ref{qexample2_anti-relective} we see results of adding an antireflective step in a mixed SWIFT algorithm. 
The approximations near the middle of computational range are almost identical with the reference value, but the approximations with antireflective adjustment near both ends of the interval appear to be much better than the ones without.

\subsection{Example 3: Bid-ask spread for interest rate}
We next consider a financial model introduced in \cite{bergman_1995}, where we have distinct borrowing and leading interest rates. The resulting market is imperfect and the driver function is non-linear.

Suppose that an agent can invest in bonds with risk-free return rate $r$ and borrow money at rate $R>r$. 
For any European-type derivative with payoff $g(X_T)$ at time $T$, where the underlying asset $S_t=\log(X_t)$ follows a geometric Brownian motion, its price at time 0 can be obtained by solving the FBSDE: 
\begin{equation*}
\left\{
\begin{array}{l}
dX_t = \left(\bar{\mu}-\frac{1}{2}\bar{\sigma}^2\right)dt +\bar{\sigma}d\omega_t,\\
dY_t =  -\left(-r Y_t-\frac{\bar{\mu}-r}{\bar{\sigma}}Z_t-(R-r) \min \left(Y_t-\frac{Z_t}{\bar{\sigma}},0\right)\right)dt+Z_td\omega_t,
\end{array}
\right.
\end{equation*}
with the payoff as the terminal condition. We use the example studied in both \cite{bender_stein_2012} and \cite{ruijter_oosterlee_2015}. The payoff function is given by 
\begin{align*}
g(X_T)=(e^{X_T}-K_1)^+-2(e^{X_T}-K_2)^+,
\end{align*}
which equals a combination of a long call with strike $K_1=95$ and two short calls with strike $K_2=105$. We use the parameter values 
\begin{equation*}
S_0=100, r=0.01, \bar{\mu}=0.05, \bar{\sigma} =0.2, T=0.25, K_1=95, K_2=105, R=0.06.
\end{equation*}

We notice that $\hat{z}^\Delta_0(x_0)$ fails to converge to the reference value with scheme D in Figure \ref{quick_SWIFT_example3a}. The reference values, $Y_0=2.9584544$ and $Z_0=0.55319$, are obtained by the BCOS method with a large number of time-steps $P$. Switching to the mixed SWIFT method, whose results are shown in Figure \ref{mixed_quick_SWIFT_example3a}, the approximated error for $Y$ converges to zero with order of about 1 for schemes A, B and C and converges with order $\frac{3}{2}$ for scheme D. For schemes B and C, we also have a first-order convergence for $Z$ but the convergence order is higher for schemes A and D. 

\subsection{Example 4}
This example is taken from \cite{zhao_fu_zhou_2014}. For the forward process, the drift and diffusion coefficients are time- and state-dependent. We aim to solve the following FBSDE: 
\begin{equation*}
\left\{
\begin{array}{l}
dX_t = \frac{1}{1+2\exp(t+X_t)}dt + \frac{\exp(t+X_t)}{1+\exp(t+X_t)} d\omega_t,\\
dY_t =  -\left(-\frac{2Y_t}{1+2\exp(t+X_t)}-\frac{1}{2}
\left(
	\frac{Y_tZ_t}{1+\exp(t+X_t)}
	-Y_t^2 Z_t	
\right)\right)dt+Z_td\omega_t,
\end{array}
\right.
\end{equation*}
with the terminal condition $Y_T=g(X_T)=\frac{\exp(T+X_T)}{1+\exp(T+X_T)}$.

The exact solutions are given by
\begin{equation}
(Y_t,Z_t) = 
\left(
	\frac{\exp(t+X_t)}{1+\exp(t+X_t)},
	\frac{(\exp(t+X_t))^2}{(1+\exp(t+X_t))^3}
\right).
\end{equation}
We choose terminal time $T=1$ and initial condition $x_0 = 1$.

For the results of the quick SWIFT method, we refer to Figure \ref{quick_SWIFT_example4}.
While the total error is different for each scheme, the approximated values  $\hat{y}^\Delta_0(x_0)$ and $\hat{z}^\Delta_0(x_0)$ converge with ${\rm O}(\Delta t)$ for all schemes, as expected. 
Here the weak order of the Euler scheme plays a prominent role as the drift and volatility are state- and time- dependent.

\subsection{Discussion}

Compared with the BCOS method, the computational time for the SWIFT-type method is slightly lower when the number of basis functions used is the same and the forward process is independent of time. 
The most time-consuming portion is the matrix-vector multiplication used to calculate the value of $\hat{z}^\Delta_p$ and $\hat{h}$. 
We acknowledge that for the same error range, the BCOS and SWIFT-type methods may require different numbers of basis functions. 

From the numerical experiments, we conclude that the computation with scheme D often fails to converge with $\Delta t$ when the time-step is small when using the quick SWIFT method. 
This is due to the discontinuity of ${z}^\Delta_P(x)$ in $x$. 
Schemes A, B and C behave similarly for the quick SWIFT and mixed SWIFT methods in our examples. 
However, scheme D often has the best performance in the mixed SWIFT method. 
This means that damping the discontinuity in our scheme is beneficial. 
The graphs also demonstrate that with a proper choice of SWIFT parameters, the approximation error itself will be dominated by the discretization error and decreases with respect to the increase of parameter $P$. 
It implies that the error of calculating expectation with SWIFT is relatively small.  

\section{Conclusion}\label{section:conclusion}
A new probabilistic method for solving FBSDEs numerically has been proposed in this article. It is derived from a time-discretization of the forward and backward stochastic differential equations, taking conditional expectations to get an $\mathcal{F}_{t_p}$-adapted approximation and calculating the conditional expectations with the quick variant of the SWIFT formula. 

We have shown that in order to apply the quick variant of the SWIFT formula, the continuity of the target function has to be ensured. While applying the quick variant of SWIFT formula instead of the original version introduces an extra error, it is of the same order of the original version when the target function is continuous and drops quickly with respect to $J$, due to the exponential convergence of the characteristic function for a smooth density. The error of applying the SWIFT method is relatively minor compared to the discretization error for the stochastic process. However, the quick variant of the SWIFT formula can greatly reduce the difficulties of our algorithm and increase the computational speed. So we believe that the mixed SWIFT method provides a good balance between efficiency and accuracy.

We have discussed the different approximation errors in detail in the paper. Additional attention is needed for the error of the SWIFT formula near the computational boundary, as we explained in Section \ref{section:anti-reflective_boundary}. We also demonstrated how to improve our algorithm with the anti-reflective boundary conditions. Finally, the applicability and the effectiveness of our numerical algorithm have been tested with various FBSDEs, which all give positive results with the mixed SWIFT method. 

Overall, applying the SWIFT method to solve discretized BSDEs retains the high accuracy of Fourier inversion techniques, although the computations involved are greatly simplified. 
We also gain additional freedom in adjusting the approximation values at each time point.
  
\bibliographystyle{plain}
\bibliography{BSWIFT.bib} 

\appendix
\section{Appendix}

\subsection{Pointwise convergence of orthogonal projection}

We shall demonstrate that under some mild assumptions on a square integrable function in $(-2^{-m}J,2^{-m}J]$, its orthogonal projection on $V_J$ converges to the original function in a pointwise fashion, therefore bounding our approximation error. It is an adaptation of the standard Dirichlet kernel argument to our setting, a similar proof can be found in standard Fourier series textbook, like \cite{pinkus_zafrany_1997}. 

\begin{theorem}
Let $g$ be a square integrable function defined on the set $(-2^{-m}J,2^{-m}J]$ and the left- and right-side derivatives exist everywhere for its alternating extension $\tilde{g}$. If $\tilde{g}$ is continuous in a neighborhood around point $x$, the following result holds:
$$\lim_{J\rightarrow\infty}H_{V_J}g(x) = \tilde{g}(x).$$
\end{theorem}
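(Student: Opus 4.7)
The plan is to recognise $H_{V_J}g$ as a partial Fourier sum of the alternating extension $\tilde g$, which (since it satisfies $\tilde g(\cdot+2\cdot 2^{-m}J)=-\tilde g(\cdot)$) is a $4\cdot 2^{-m}J$-periodic function whose Fourier series involves only the odd harmonics spanning $V_J$. Consequently the statement reduces to a classical Dirichlet-kernel convergence result. Concretely, I would start from Eq.~\eqref{kernel} and the definition of $H_{V_J}$ to write
\begin{equation*}
H_{V_J}g(x)=\frac{2^m}{J}\int_{-2^{-m}J}^{2^{-m}J}g(y)\sum_{k=1}^{J}\cos\bigl(C_k 2^m(x-y)\bigr)\,dy,
\end{equation*}
then substitute $u=2^m(x-y)$. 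Two facts then combine: (i) $\tilde g(y+2\cdot 2^{-m}J)=-\tilde g(y)$ by construction, and (ii) $\sum_{k=1}^J\cos(C_k(u+2J))=-\sum_{k=1}^J\cos(C_k u)$ because $C_k\cdot 2J=(2k-1)\pi$. The integrand is therefore $2J$-periodic in $u$, so the interval can be shifted from $[2^m x-J,2^m x+J]$ to $[-J,J]$, giving
\begin{equation*}
H_{V_J}g(x)=\frac{1}{J}\int_{-J}^{J}\tilde g(x-2^{-m}u)\,\tilde D_J(u)\,du,\qquad \tilde D_J(u):=\frac{\sin(\pi u)}{2\sin(\pi u/(2J))}.
\end{equation*}

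Next, I would compute $\frac{1}{J}\int_{-J}^{J}\tilde D_J(u)\,du$ by integrating the cosine expansion term by term; this produces $\frac{4}{\pi}\sum_{k=1}^{J}(-1)^{k-1}/(2k-1)$, which tends to $1$ by the Leibniz series for $\pi/4$. Writing
\begin{equation*}
H_{V_J}g(x)-\tilde g(x)=\frac{1}{J}\int_{-J}^{J}[\tilde g(x-2^{-m}u)-\tilde g(x)]\,\tilde D_J(u)\,du+\tilde g(x)\Bigl(\tfrac{1}{J}\int_{-J}^{J}\tilde D_J(u)\,du-1\Bigr),
\end{equation*}
the second summand already vanishes as $J\to\infty$, and the task reduces to showing that the first summand does as well.

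For that remaining integral I would use the factorisation $\tilde D_J(u)/J=\frac{\sin(\pi u)}{\pi u}\cdot\frac{\pi u/(2J)}{\sin(\pi u/(2J))}$, where the second factor is uniformly bounded on $[-J,J]$ and tends to $1$ pointwise, and then split the domain as $|u|\le\delta$ and $\delta<|u|\le J$ for some small $\delta>0$. Near $u=0$, the assumed one-sided differentiability of $\tilde g$ at $x$, together with the continuity of $\tilde g$ in a neighbourhood of $x$, gives $[\tilde g(x-2^{-m}u)-\tilde g(x)]/u$ uniformly bounded, and combining with $|\sin(\pi u)/(\pi u)|\le 1$ one obtains a contribution of order $\delta$ uniformly in $J$. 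On the complementary set I would exploit the boundedness of $\tilde g$ (continuity plus periodicity) and treat $1/(2J\sin(\pi u/(2J)))$ as a slowly varying amplitude, so that the factor $\sin(\pi u)$ delivers Riemann–Lebesgue-type cancellation as $J\to\infty$.

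The main obstacle is precisely this last step: the ``test function'' multiplying the oscillating sine depends on $J$, so the classical Riemann–Lebesgue lemma does not apply directly. The honest work is to obtain a $J$-uniform $L^1$ bound on the non-oscillatory part of the kernel outside a neighbourhood of $0$, and to verify that the resulting oscillatory integrals of a uniformly bounded (in $J$) family tend to zero—either by a Dini-type argument tailored to the scaling $u\mapsto 2^{-m}u$, or by first passing to the rescaled variable $v=\pi u/(2J)$ which converts the expression into a standard Dirichlet kernel integral on $[-\pi/2,\pi/2]$, at which point the textbook Dirichlet pointwise convergence theorem (e.g.\ from \cite{pinkus_zafrany_1997}) can be cited directly.
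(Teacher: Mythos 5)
Your proposal follows essentially the same route as the paper's proof: the Dirichlet-kernel representation of $H_{V_J}g$ obtained by passing to the alternating extension and exploiting the sign-flip periodicity, the normalization via the Gregory--Leibniz series $\frac{4}{\pi}\sum_{k=1}^{J}\frac{(-1)^{k-1}}{2k-1}\to 1$, and a Riemann--Lebesgue argument applied to the difference quotient. The obstacle you flag at the end is resolved exactly by the second option you propose --- rescaling back to the fixed interval (the paper works directly in $\varpi=2^{-m}u\in[-a,a]$ with $a=2^{-m}J$ held constant as $J\to\infty$), which makes the test function $\frac{\tilde g(\varpi+x)-\tilde g(x)}{\varpi}\cdot\frac{\pi\varpi/2a}{\sin(\pi\varpi/2a)}$ a single $J$-independent integrable function and isolates the oscillation in the factor $\sin(2^{m-1}\pi\varpi)$ whose frequency tends to infinity, so the classical Riemann--Lebesgue lemma applies verbatim.
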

\begin{proof}
By direct calculation,
\begin{align}
&
H_{V_J}g(x) \nonumber\\
= & 
\frac{2^m}{J} \sum^J_{k = 1}\left(
	\int^{2^{-m} J}_{-2^{-m} J} g(\varsigma) 
	\cos\left(2^m C_k \varsigma \right) d \varsigma 
	\cos\left(2^m C_k x\right) 
	+\int^{2^{-m} J}_{-2^{-m} J} g(\varsigma) 
	\sin\left(2^m C_k \varsigma \right) d \varsigma 
	\sin\left(2^m C_k x\right) 
\right)\nonumber\\
= & 
\frac{2^m}{J} \sum^J_{k = 1}\left(
	\int^{2^{-m} J}_{-2^{-m} J} \tilde{g}(\varsigma) 
	\cos\left(2^m C_k (\varsigma - x) \right) d \varsigma 
\right)
= 
\frac{2^m}{J} \sum^J_{k = 1}\left(
	\int^{2^{-m} J}_{-2^{-m} J} \tilde{g}(\varpi+x) 
	\cos\left(2^m C_k \varpi \right) d \varpi 
\right)\nonumber\\
= &
\frac{2^m}{J} 
\int^{2^{-m} J}_{-2^{-m} J} \tilde{g}(\varpi+x) 
\Upsilon_J (\varpi) d \varpi, 
\end{align}
where 
\begin{equation}
\Upsilon_J(x) := \sum^J_{k=1}\cos(2^m C_k x) = \frac{\sin(2^{m-1}\pi x)}{\sin(2^{m-1}\pi x/J)}.
\end{equation}
It can be shown that 
\begin{equation}
\frac{2^m}{J}\int^{2^{-m}J}_0 \Upsilon_J(\varsigma) d\varsigma = 
\frac{2^m}{J}\int_{-2^{-m}J}^0 \Upsilon_J(\varsigma) d\varsigma =
\frac{2}{\pi}\sum^J_{k=1}\frac{(-1)^{k+1}}{2k-1} =: \frac{2}{\pi} G_J.
\end{equation}
In fact, $G_J$ is the famous Gregory-Leibniz series and we have $\lim_{J\rightarrow\infty} G_J =\frac{\pi}{4}$.

Based on our assumption, at point $x$, $\lim_{h\rightarrow 0+}\tilde{g}(x+h)=\lim_{h\rightarrow 0-}\tilde{g}(x+h)=\tilde{g}(x)$. Also, $\frac{g(s+x)-g(x)}{s}\mathbbm{1}_{\{s\in(0,2^{-m}J)\}}$ and $\frac{g(s+x)-g(x)}{s}\mathbbm{1}_{\{s\in(-2^{-m}J,0)\}}$ are integrable functions on $(2^{-m}J,2^{-m}J]$. Note that in our construction, $2^{-m}J$ is a positive constant $(a)$ and $m$ is a function of $J$.

Therefore,
\begin{align}
&
H_{V_J}g(x) - \frac{4}{\pi} G_J\tilde{g}(x)\nonumber\\
= &
\frac{2^m}{J} 
\int^{a}_{0} (\tilde{g}(\varpi+x)-\tilde{g}(x)) 
\Upsilon_J (\varpi) d \varpi
+\frac{2^m}{J} 
\int^{0}_{-a} (\tilde{g}(\varpi+x)-\tilde{g}(x)) 
\Upsilon_J (\varpi) d \varpi\nonumber\\
= &
\frac{2}{\pi}\int^{a}_{0} \frac{\tilde{g}(\varpi+x)-\tilde{g}(x)}{\varpi}
\frac{\pi\varpi/2a}{\sin(\pi \varpi/2a)} 
\sin(2^{m-1}\pi\varpi) d \varpi\nonumber\\
&
+\frac{2}{\pi}\int^{0}_{-a} \frac{\tilde{g}(\varpi+x)-\tilde{g}(x)}{\varpi}
\frac{\pi\varpi/2a}{\sin(\pi \varpi/2a)} 
\sin(2^{m-1}\pi\varpi) d \varpi.
\end{align}
Since $\frac{x}{\sin(x)}$ is integrable on $[-\frac{\pi}{2}, \frac{\pi}{2}]$ and $m$ tends to infinity whenever $J$ tends to infinity, the last two terms go to 0, when $J$ tends to infinity. This is due to the Riemann-Lebesgue Lemma. So, we have 
\begin{equation}
\lim_{J\rightarrow\infty} H_{V_J}g(x) 
= \lim_{J\rightarrow\infty} \frac{4}{\pi}G_J\tilde{g}(x) = \tilde{g}(x), 
\end{equation}
and complete the proof.
\end{proof}

\subsection{Proof of Equations \eqref{recurring_z} and \eqref{recurring_y}}
\begin{proof}
Using Equations \eqref{discrtized_z}, \eqref{eq:approximated_expectation},  \eqref{eq:approximated_expectation_with_omega}, \eqref{error_formula}, and standing assumption (A3), we have
\begin{align*}
	& |\hat{z}^\Delta_p(x) - z^\Delta_p(x)| \\
	\leq &
	\frac{1-\theta_2}{\theta_2}|\mathbb{E}^x_p[z^\Delta_{p+1}(X^\Delta_{t_{p+1}})]- \hat{\mathbb{E}}[z^{\Delta}_{p+1}(X^\Delta_{t_{p+1}})|X^\Delta_{t_p}=x]|
	\\
	+ &
	\frac{1}{\theta_2\Delta t}|\mathbb{E}^x_p[y^\Delta_{p+1}(X^\Delta_{t_{p+1}})\Delta\omega_{p+1}]
	-\hat{\mathbb{E}}[y^\Delta_{p+1}(X^\Delta_{t_{p+1}})\Delta\omega_{p+1}|X^\Delta_{t_p}=x]|\\
	+ &
	\frac{1-\theta_2}{\theta_2}
	|\mathbb{E}^x_p[f(t_{p+1},X^\Delta_{t_{p+1}},y^\Delta_{p+1}(X^\Delta_{t_{p+1}}), z^\Delta_{p+1}(X^\Delta_{t_{p+1}}))\Delta\omega_{p+1}]\\
	& \hspace{30pt}
	-\hat{\mathbb{E}}[f(t_{p+1},X^\Delta_{t_{p+1}},y^\Delta_{p+1}(X^\Delta_{t_{p+1}}), z^\Delta_{p+1}(X^\Delta_{t_{p+1}}))\Delta\omega_{p+1}|X^\Delta_{t_p}=x]|\\
	\leq & 
	\frac{1-\theta_2}{\theta_2}|\zeta^{m,J}_{z^{\Delta}_{p+1}}(t_p,x)|
	+\frac{1}{\theta_2 \Delta t}|\zeta^{m,J,\omega}_{y^{\Delta}_{p+1}}(t_p,x)|
	+\frac{1-\theta_2}{\theta_2}|\zeta^{m,J,\omega}_{f_{p+1}}(t_p,x)|\\
	+ & \frac{1-\theta_2}{\theta_2}\frac{1}{J}\sum^J_{r=1-J}\left|z^\Delta_{p+1}\left(\frac{r}{2^m}\right)-\rho_{z^{\Delta}_{p+1}}\left(\frac{r}{2^m}\right)\right||\mathbb{E}^x_p[\varphi_{J,r}(X^\Delta_{t_{p+1}})]|\\
	+ & \frac{1}{\theta_2 \Delta t}\frac{1}{J}\sum^{J}_{r=1-J}\left|y^\Delta_{p+1}\left(\frac{r}{2^m}\right)-\rho_{y^\Delta_{p+1}}\left(\frac{r}{2^m}\right)\right||\mathbb{E}^x_p[\varphi_{J,r}(X^\Delta_{t_{p+1}})\Delta\omega_{p+1}]|\\
	+ & \frac{1-\theta_2}{\theta_2}\frac{1}{J}\sum^J_{r=1-J}\left|f\left(t_{p+1},\frac{r}{2^m}, y^\Delta_{p+1}\left(\frac{r}{2^m}\right), z^\Delta_{p+1}\left(\frac{r}{2^m}\right)\right)-\rho_{f_{p+1}}\left(\frac{r}{2^m}\right)\right||\mathbb{E}^x_p[\varphi_{J,r}(X^\Delta_{t_{p+1}})\Delta\omega_{p+1}]|\\
	\leq & 
	\frac{1-\theta_2}{\theta_2}|\zeta^{m,J}_{z^{\Delta}_{p+1}}(t_p,x)|
	+\frac{1}{\theta_2 \Delta t}|\zeta^{m,J,\omega}_{y^{\Delta}_{p+1}}(t_p,x)|
	+\frac{1-\theta_2}{\theta_2}|\zeta^{m,J,\omega}_{f_{p+1}}(t_p,x)|\\
	+ & 
	\frac{(1-\theta_2)(1+M)}{\theta_2}\frac{1}{J}\sum^J_{r=1-J}\left|z^\Delta_{p+1}\left(\frac{r}{2^m}\right)-\rho_{z^{\Delta}_{p+1}}\left(\frac{r}{2^m}\right)\right|
		(|\mathbb{E}^x_p[\varphi_{J,r}(X^\Delta_{t_{p+1}})]|+\mathbb{E}^x_p[\varphi_{J,r}(X^\Delta_{t_{p+1}})\Delta\omega_{p+1}]|)\\
	+ &
	\left(\frac{1}{\theta_2 \Delta t}+\frac{(1-\theta_2)M}{\theta_2}\right)\frac{1}{J}\sum^{J}_{r=1-J}\left|y^\Delta_{p+1}\left(\frac{r}{2^m}\right)-\rho_{y^\Delta_{p+1}}\left(\frac{r}{2^m}\right)\right||\mathbb{E}^x_p[\varphi_{J,r}(X^\Delta_{t_{p+1}})\Delta\omega_{p+1}]|
\end{align*}
Note that we have added some extra terms in the last inequality to simplify the expression. 
Taking the maximum over $\left\{\frac{(1-\theta_2)(1+M)}{\theta_2}, \frac{1}{\theta_2 \Delta t}+\frac{(1-\theta_2)M}{\theta_2}\right\}$ finishes the proof for Equation \eqref{recurring_z}.

Using Equations \eqref{picard_iteration_error}, \eqref{recurring_z}, \eqref{function_h}, \eqref{eq:approximation_function_h}, \eqref{eq:approximated_expectation},\eqref{error_formula}, and standing assumption (A3), we have
\begin{align*}
	& |\hat{y}^{\Delta,I}_p-y^\Delta_p(x)| \\
	\leq & 
	\frac{1+\xi}{1-\xi}\varepsilon^{Picard}_p+\frac{\xi}{1-\xi}|\hat{z}^\Delta_p(x)-z^\Delta_p(x)|+ \frac{1}{1-\xi}|\hat{h}(t_p,x)-h(t_p,x)|\\
	\leq &
	\frac{1+\xi}{1-\xi}\varepsilon^{Picard}_p
	+\frac{M_1\cdot \xi}{1-\xi}(|\zeta^{m,J}_{z^{\Delta}_{p+1}}(t_p,x)|+|\zeta^{m,J,\omega}_{y^{\Delta}_{p+1}}(t_p,x)|	+|\zeta^{m,J,\omega}_{f_{p+1}}(t_p,x)|)\\
	+ & \frac{M_1\cdot \xi}{1-\xi} \frac{1}{J}\sum^J_{r=1-J}\left|z^\Delta_{p+1}\left(\frac{r}{2^m}\right)-\rho_{z^{\Delta}_{p+1}}\left(\frac{r}{2^m}\right)\right|
		(|\mathbb{E}^x_p[\varphi_{J,r}(X^\Delta_{t_{p+1}})]|+\mathbb{E}^x_p[\varphi_{J,r}(X^\Delta_{t_{p+1}})\Delta\omega_{p+1}]|)\\
	+ & \frac{M_1\cdot \xi}{1-\xi}\frac{1}{J}\sum^{J}_{r=1-J}\left|y^\Delta_{p+1}\left(\frac{r}{2^m}\right)-\rho_{y^\Delta_{p+1}}\left(\frac{r}{2^m}\right)\right||\mathbb{E}^x_p[\varphi_{J,r}(X^\Delta_{t_{p+1}})\Delta\omega_{p+1}]|\\
	+ & \frac{1}{1-\xi}|\mathbb{E}^x_p[y^\Delta_{p+1}(X^\Delta_{t_{p+1}})]-\hat{\mathbb{E}}[y^\Delta_{p+1}(X^\Delta_{t_{p+1}})|X^\Delta_{t_p}=x]|\\
	+ & \frac{\Delta t (1-\theta_1)}{1-\xi}|\mathbb{E}^x_p[f(t_{p+1},X^\Delta_{t_{p+1}}, y^\Delta_{p+1}(X^\Delta_{t_{p+1}}), z^\Delta_{p+1}(X^\Delta_{t_{p+1}})]-\hat{\mathbb{E}}[f(t_{p+1},X^\Delta_{t_{p+1}},y^\Delta_{p+1}(X^\Delta_{t_{p+1}}), z^\Delta_{p+1}(X^\Delta_{t_{p+1}}))|X^\Delta_{t_p}=x]||\\
	\leq &
	\frac{1+\xi}{1-\xi}\varepsilon^{Picard}_p
	+\frac{M_1\cdot \xi}{1-\xi}(|\zeta^{m,J}_{z^{\Delta}_{p+1}}(t_p,x)|+|\zeta^{m,J,\omega}_{y^{\Delta}_{p+1}}(t_p,x)|	+|\zeta^{m,J,\omega}_{f_{p+1}}(t_p,x)|)\\
	+ & 
	\frac{1}{1-\xi}|\zeta^{m,J}_{y^\Delta_{p+1}}(t_p,x)| + \frac{\Delta t(1-\theta_1)}{1-\xi}|\zeta^{m,J}_{f_{p+1}}(t_p,x)|\\
	+ & 
	\frac{M_1\cdot \xi}{1-\xi} \frac{1}{J}\sum^J_{r=1-J}\left|z^\Delta_{p+1}\left(\frac{r}{2^m}\right)-\rho_{z^{\Delta}_{p+1}}\left(\frac{r}{2^m}\right)\right|
		(|\mathbb{E}^x_p[\varphi_{J,r}(X^\Delta_{t_{p+1}})]|+\mathbb{E}^x_p[\varphi_{J,r}(X^\Delta_{t_{p+1}})\Delta\omega_{p+1}]|)\\
	+ &
	\frac{M_1\cdot \xi}{1-\xi}\frac{1}{J}\sum^{J}_{r=1-J}\left|y^\Delta_{p+1}\left(\frac{r}{2^m}\right)-\rho_{y^\Delta_{p+1}}\left(\frac{r}{2^m}\right)\right||\mathbb{E}^x_p[\varphi_{J,r}(X^\Delta_{t_{p+1}})\Delta\omega_{p+1}]|\\
	+ &
	\frac{1}{1-\xi}\frac{1}{J}\sum^J_{r=1-J}\left|y^\Delta_{p+1}\left(\frac{r}{2^m}\right)-\rho_{y^{\Delta}_{p+1}}\left(\frac{r}{2^m}\right)\right||\mathbb{E}^x_p[\varphi_{J,r}(X^\Delta_{t_{p+1}})]|\\
	+ &
	\frac{\Delta t (1-\theta_1)}{1-\xi}\frac{1}{J}\sum^J_{r=1-J}\left|f\left(t_{p+1},\frac{r}{2^m}, y^\Delta_{p+1}\left(\frac{r}{2^m}\right), z^\Delta_{p+1}\left(\frac{r}{2^m}\right)\right)-\rho_{f_{p+1}}\left(\frac{r}{2^m}\right)\right||\mathbb{E}^x_p[\varphi_{J,r}(X^\Delta_{t_{p+1}})]|\\
	\leq &
	\frac{1+\xi}{1-\xi}\varepsilon^{Picard}_p
	+\frac{M_1\cdot \xi}{1-\xi}(|\zeta^{m,J}_{z^{\Delta}_{p+1}}(t_p,x)|+|\zeta^{m,J,\omega}_{y^{\Delta}_{p+1}}(t_p,x)|	+|\zeta^{m,J,\omega}_{f_{p+1}}(t_p,x)|)\\
	+ & 
	\frac{1}{1-\xi}|\zeta^{m,J}_{y^\Delta_{p+1}}(t_p,x)| + \frac{\Delta t(1-\theta_1)}{1-\xi}|\zeta^{m,J}_{f_{p+1}}(t_p,x)|\\
	+ &	
	\frac{M_1\cdot \xi + M \Delta t(1-\theta_1))}{1-\xi} \frac{1}{J}\sum^J_{r=1-J}\left|z^\Delta_{p+1}\left(\frac{r}{2^m}\right)-\rho_{z^{\Delta}_{p+1}}\left(\frac{r}{2^m}\right)\right|
		(|\mathbb{E}^x_p[\varphi_{J,r}(X^\Delta_{t_{p+1}})]|+\mathbb{E}^x_p[\varphi_{J,r}(X^\Delta_{t_{p+1}})\Delta\omega_{p+1}]|)\\
	+ & 
	\frac{M_1\cdot \xi+1+M \Delta t(1-\theta_1)}{1-\xi} \frac{1}{J}\sum^J_{r=1-J}\left|y^\Delta_{p+1}\left(\frac{r}{2^m}\right)-\rho_{y^{\Delta}_{p+1}}\left(\frac{r}{2^m}\right)\right|
		(|\mathbb{E}^x_p[\varphi_{J,r}(X^\Delta_{t_{p+1}})]|+\mathbb{E}^x_p[\varphi_{J,r}(X^\Delta_{t_{p+1}})\Delta\omega_{p+1}]|).
\end{align*}
This concludes the proof if $M_2 := \frac{M_1\cdot \xi+1+M \Delta t(1-\theta_1)}{1-\xi}$ and $M_3 := \frac{1+\xi}{1-\xi}\frac{\varepsilon^{Picard}_p}{M_2}$. Again, extra terms are included in the expression to simplify the formula.
\end{proof}

\subsection{A simple example for deriving the error formula}

In this section, we would use the result in Section \ref{errors of the recursive scheme} to derive an error formula for the approximation of Example 2 in Section \ref{section_example_2}.
In addition to the parameters provided in Section \ref{section_example_2}, we let $\theta_1 = 0$, $\theta_2 = 1$ and $P=2$.
Note that $P=2$ is merely used here to simplify our expression.

It is clear that driver function $f$ and terminal function $g$ satisfy all standing assumptions with the Lipschitz coefficient of $f$ with respect to $y$ and $z$ being 0.4.
We have $\Delta t = 0.05$, $\xi = \varepsilon^{Picard}_p = 0$ for $p = 0,1$ in our setting.
Using the derivation in Appendix B, Equations \eqref{recurring_z} and \eqref{recurring_y} can be simplified as follows:
\begin{align*}
	\frac{1}{20}|\hat{z}^\Delta_p(x) - z^\Delta_p(x)| 
	\leq & 
	|\zeta^{m,J,\omega}_{y^{\Delta}_{p+1}}(t_p,x)|
	+ \frac{1}{J}\sum^{J}_{r=1-J}\left|y^\Delta_{p+1}\left(\frac{r}{2^m}\right)-\rho_{y^\Delta_{p+1}}\left(\frac{r}{2^m}\right)\right||\mathbb{E}^x_p[\varphi_{J,r}(X^\Delta_{t_{p+1}})\Delta\omega_{p+1}]|\\
	\frac{1}{1.02}|\hat{y}^{\Delta,I}_p(x) - y^{\Delta}_p(x)|
	\leq & 
	|\zeta^{m,J}_{y^\Delta_{p+1}}(t_p,x)| + |\zeta^{m,J}_{f_{p+1}}(t_p,x)|\\
	+ &	
	\frac{1}{J}\sum^J_{r=1-J}\left|z^\Delta_{p+1}\left(\frac{r}{2^m}\right)-\rho_{z^{\Delta}_{p+1}}\left(\frac{r}{2^m}\right)\right|
		(|\mathbb{E}^x_p[\varphi_{J,r}(X^\Delta_{t_{p+1}})]|+\mathbb{E}^x_p[\varphi_{J,r}(X^\Delta_{t_{p+1}})\Delta\omega_{p+1}]|)\\
	+ & 
	\frac{1}{J}\sum^J_{r=1-J}\left|y^\Delta_{p+1}\left(\frac{r}{2^m}\right)-\rho_{y^{\Delta}_{p+1}}\left(\frac{r}{2^m}\right)\right|
		(|\mathbb{E}^x_p[\varphi_{J,r}(X^\Delta_{t_{p+1}})]|+\mathbb{E}^x_p[\varphi_{J,r}(X^\Delta_{t_{p+1}})\Delta\omega_{p+1}]|),
\end{align*}
for $p = 0, 1$.
Therefore, the error of applying the quick SWIFT scheme to the discretizated system reads:
\begin{align*}
	&
	|\hat{y}^{\Delta, I}_0(x_0)-y^\Delta_1(x_0)|\\
	\leq & 
	1.02|\zeta^{m,J}_{y^\Delta_{1}}(0,x_0)| + 1.02 |\zeta^{m,J}_{f_{1}}(0,x_0)|\\
	+ &	
	\frac{1.02}{J}\sum^J_{r=1-J}\left|z^\Delta_{1}\left(\frac{r}{2^m}\right)-\hat{z}^{\Delta}_{1}\left(\frac{r}{2^m}\right)\right|
		(|\mathbb{E}^{x_0}_0[\varphi_{J,r}(X^\Delta_{t_{1}})]|+\mathbb{E}^{x_0}_0[\varphi_{J,r}(X^\Delta_{t_{1}})\Delta\omega_{1}]|)\\
	+ & 
	\frac{1.02}{J}\sum^J_{r=1-J}\left|y^\Delta_{1}\left(\frac{r}{2^m}\right)-\hat{y}^{\Delta,I}_{1}\left(\frac{r}{2^m}\right)\right|
		(|\mathbb{E}^{x_0}_0[\varphi_{J,r}(X^\Delta_{t_{1}})]|+\mathbb{E}^{x_0}_0[\varphi_{J,r}(X^\Delta_{t_{1}})\Delta\omega_{1}]|)\\
			\leq & 
	1.02|\zeta^{m,J}_{y^\Delta_{1}}(0,x_0)| + 1.02|\zeta^{m,J}_{f_{1}}(0,x_0)|\\
	+ & 
	\frac{1}{J}\sum^J_{r=1-J}\left(
		20.4\left|\zeta^{m,J,\omega}_{y^{\Delta}_{2}}\left(t_1,\frac{r}{2^m}\right)\right| 
		+ 1.02^2\left|\zeta^{m,J}_{y^\Delta_{2}}\left(t_1,\frac{r}{2^m}\right)\right| 
		+ 1.02^2\left|\zeta^{m,J}_{f_{2}}\left(t_1,\frac{r}{2^m}\right)\right|
	\right)\\
	& \hspace{15mm}
	(|\mathbb{E}^{x_0}_0[\varphi_{J,r}(X^\Delta_{t_{1}})]|+\mathbb{E}^{x_0}_0[\varphi_{J,r}(X^\Delta_{t_{1}})\Delta\omega_{1}]|).
\end{align*}
Note that there is no recurring error at $t_2$ as we know the exact terminal condition.

\newpage

\begin{figure}
\centering
\begin{subfigure}{\textwidth}
    \centering
        \includegraphics[width=\textwidth]{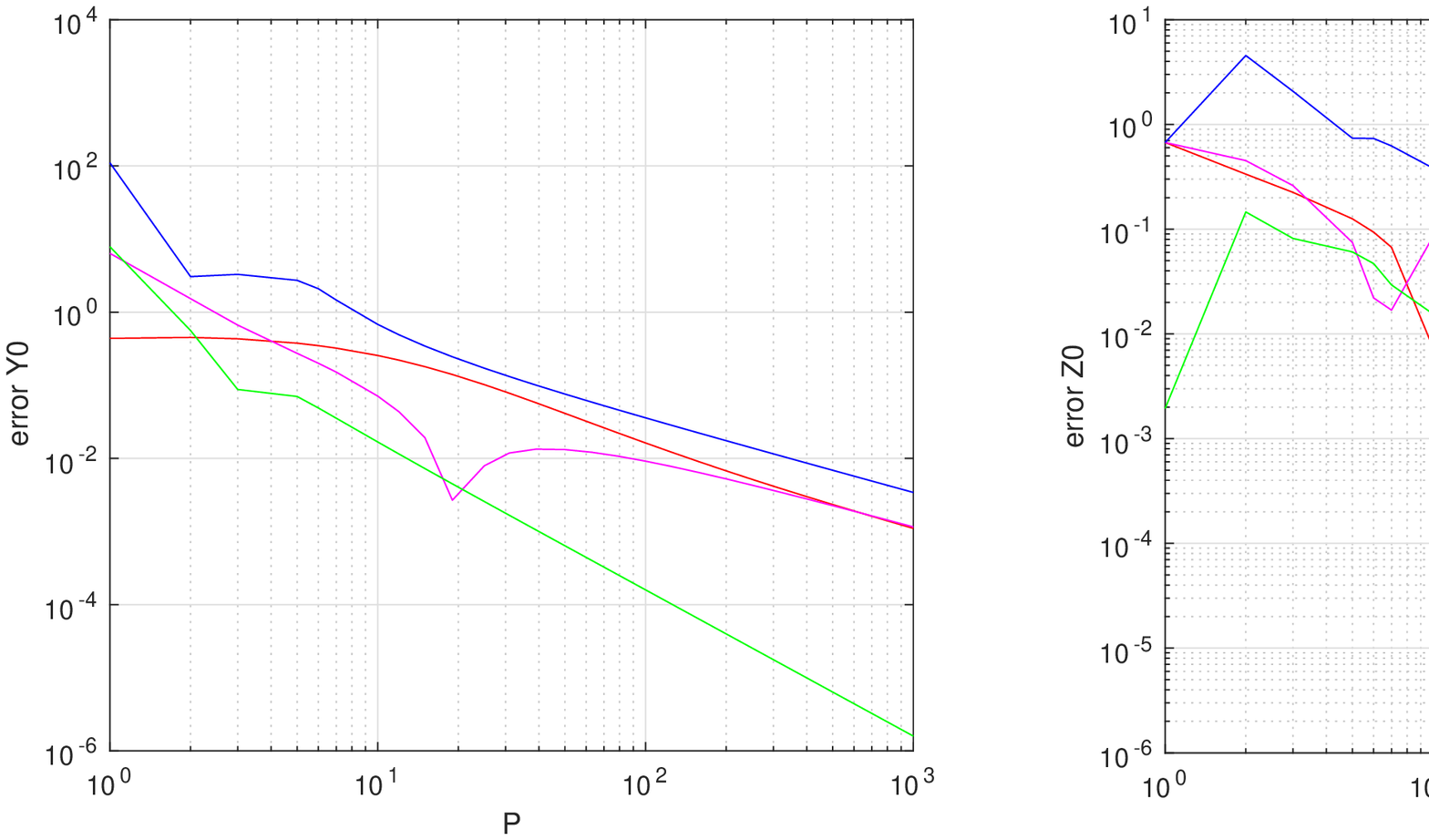}
    \caption{Quick SWIFT with $J=2^9$}
    \label{quick_SWIFT_example1}
\end{subfigure}%

\begin{subfigure}{\textwidth}
    \centering
        \includegraphics[width=\textwidth]{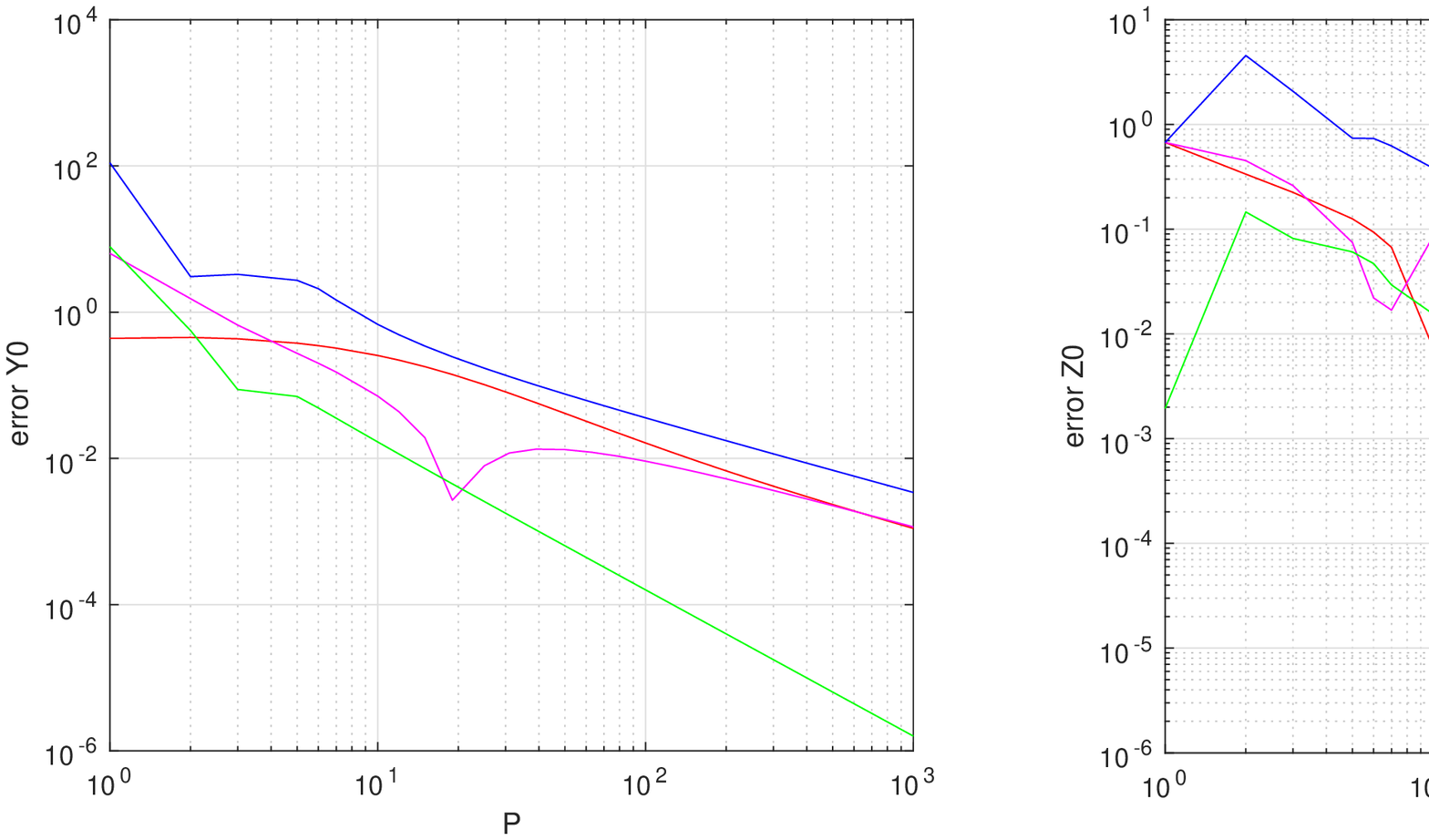}
    \caption{Mixed SWIFT with $J=2^9$}
    \label{mixed_quick_SWIFT_example1}
\end{subfigure}
\caption{Results example 1, left: error in $y(0,x_0)$, right: error in $z(0,x_0)$}
\end{figure}

\begin{figure}
\centering
\begin{subfigure}{\textwidth}
    \centering
        \includegraphics[width=\textwidth]{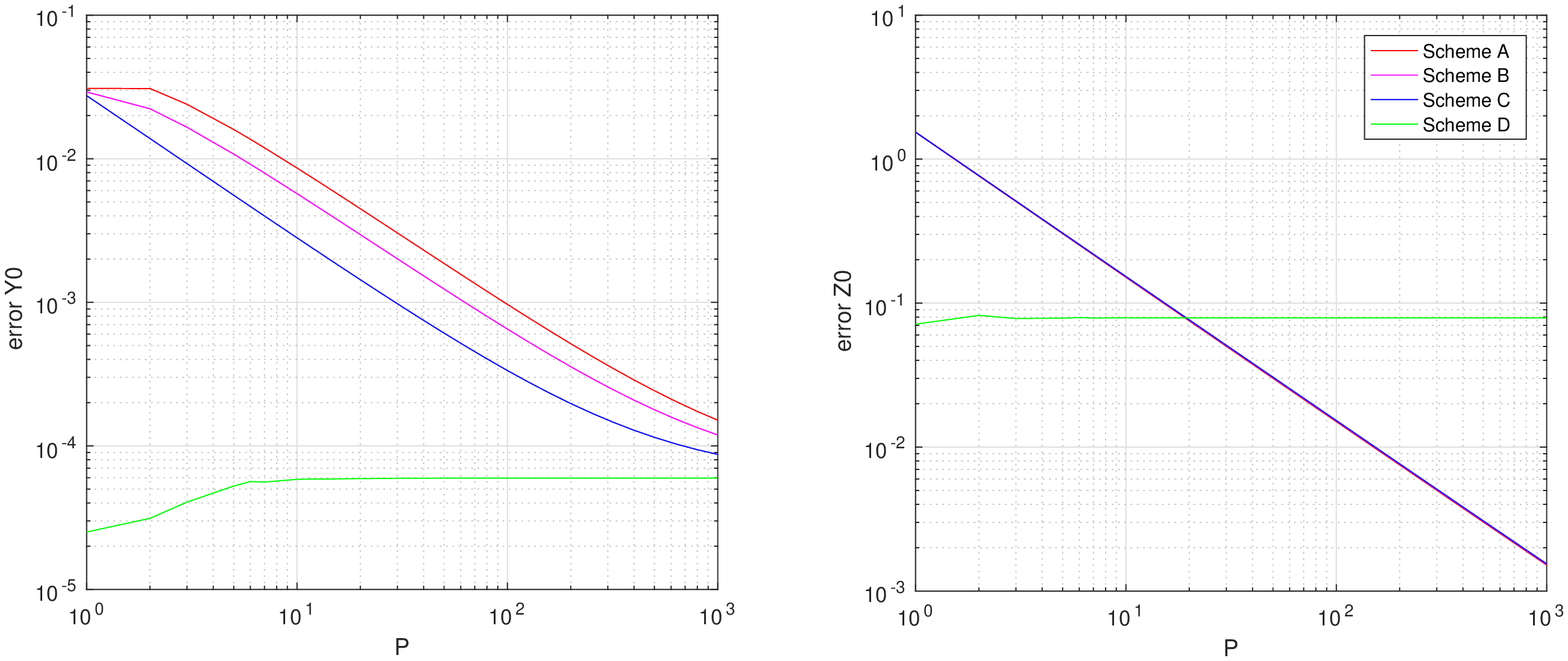}
    \caption{Quick SWIFT with $J=2^{9}$}
    \label{quick_SWIFT_example2}
\end{subfigure}%

\begin{subfigure}{\textwidth}
    \centering
     \includegraphics[width=\textwidth]{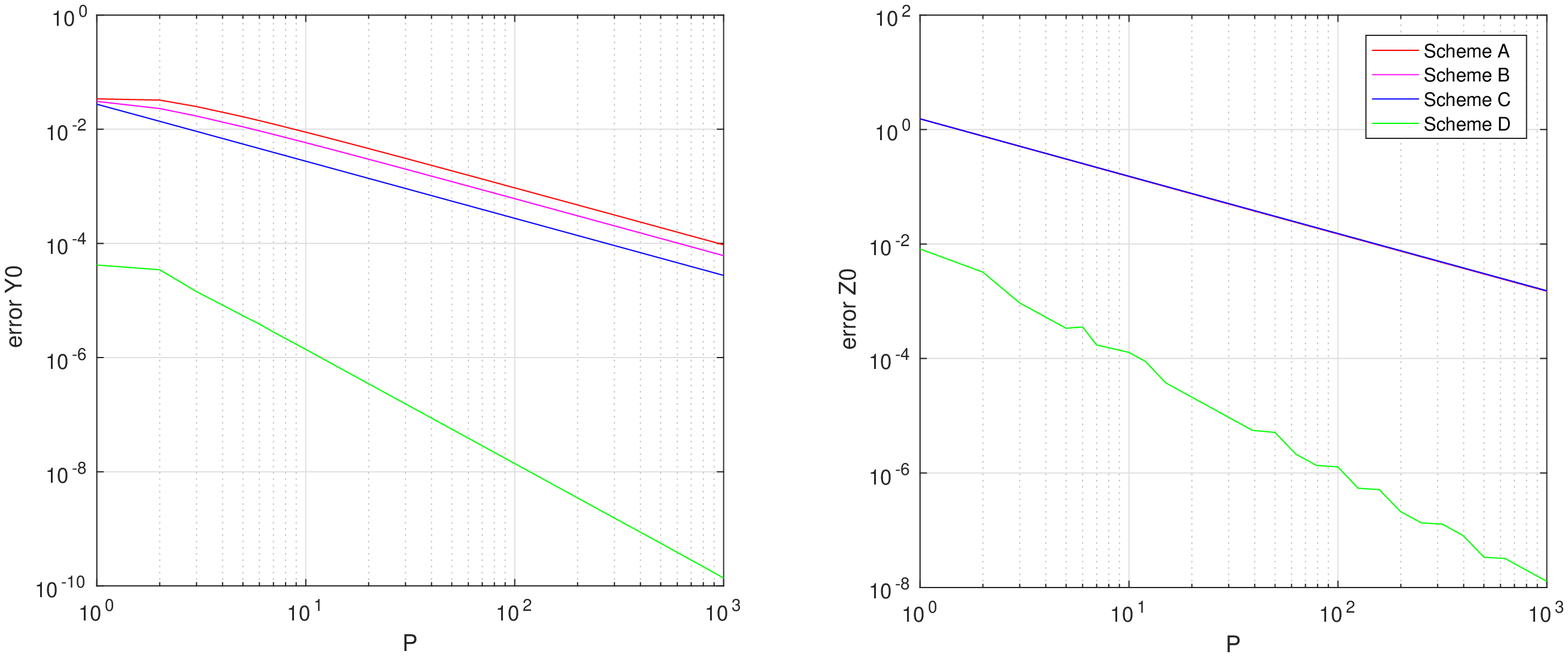}
    \caption{Mixed  SWIFT with $J=2^9$}
    \label{mixed_quick_SWIFT_example2}
\end{subfigure}
\caption{Results example 2, left: error in $y(0,x_0)$, right: error in $z(0,x_0)$}
\end{figure}

\begin{figure}
\centering
\includegraphics[width=\textwidth]{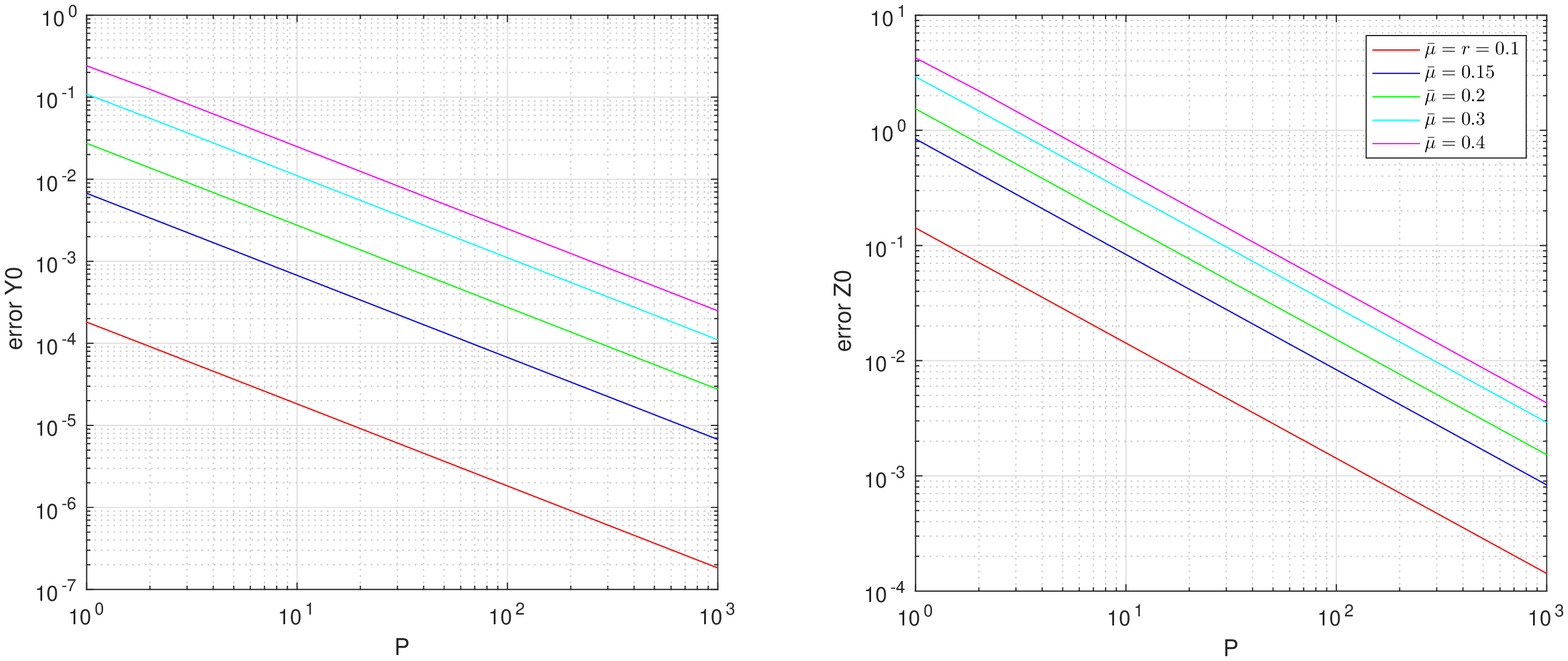}
\caption{Results example 2 for different values of $\bar{\mu}$ (Scheme C)}
\label{qexample2_mu}
\end{figure}

\begin{figure}
\centering
\includegraphics[width=\textwidth]{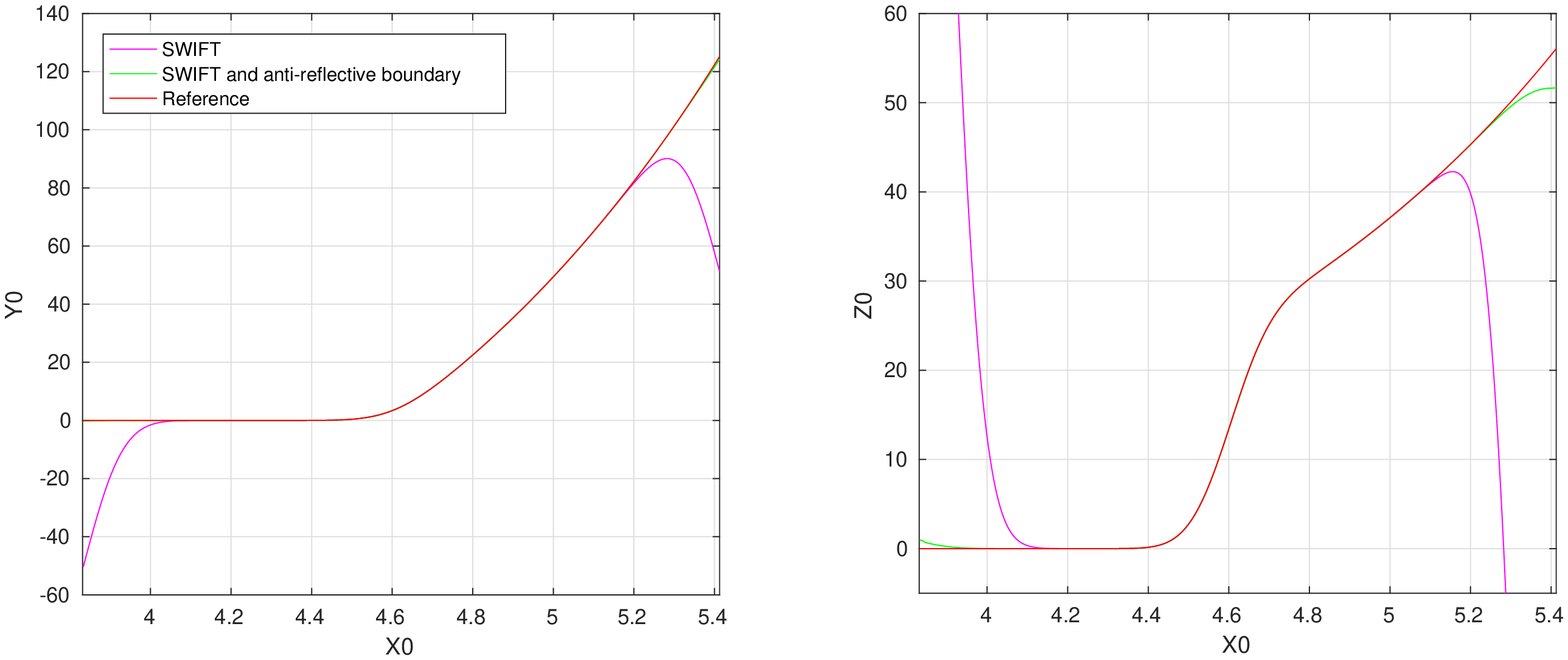}
\caption{Results example 2 with and without applying anti-reflective boundary technique (Scheme D, $P=1000$ and $J=2^9$)}
\label{qexample2_anti-relective}
\end{figure}

\begin{figure}
\centering
\begin{subfigure}{\textwidth}
    \centering
    \includegraphics[width=\textwidth]{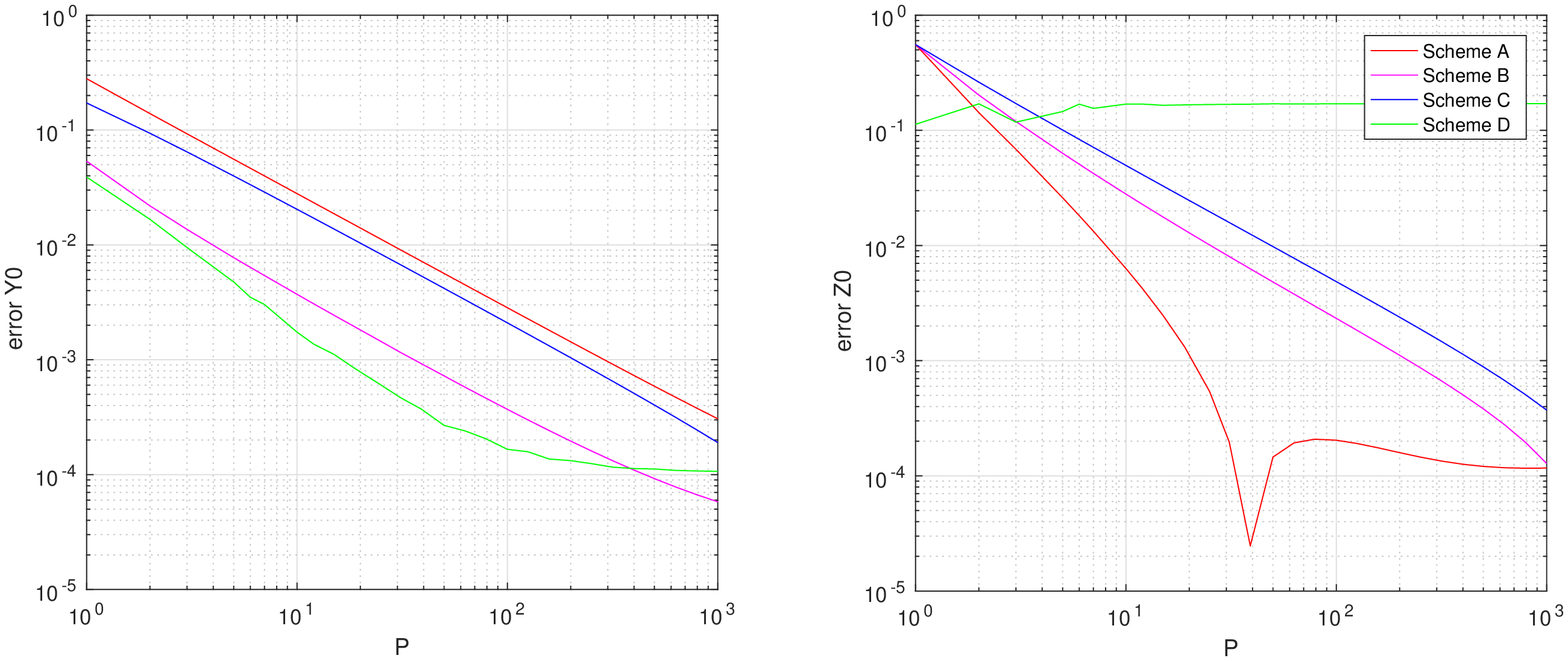}
    \caption{Quick SWIFT with $J=2^{9}$}
    \label{quick_SWIFT_example3a}
\end{subfigure}%

\begin{subfigure}{\textwidth}
    \centering
    \includegraphics[width=\textwidth]{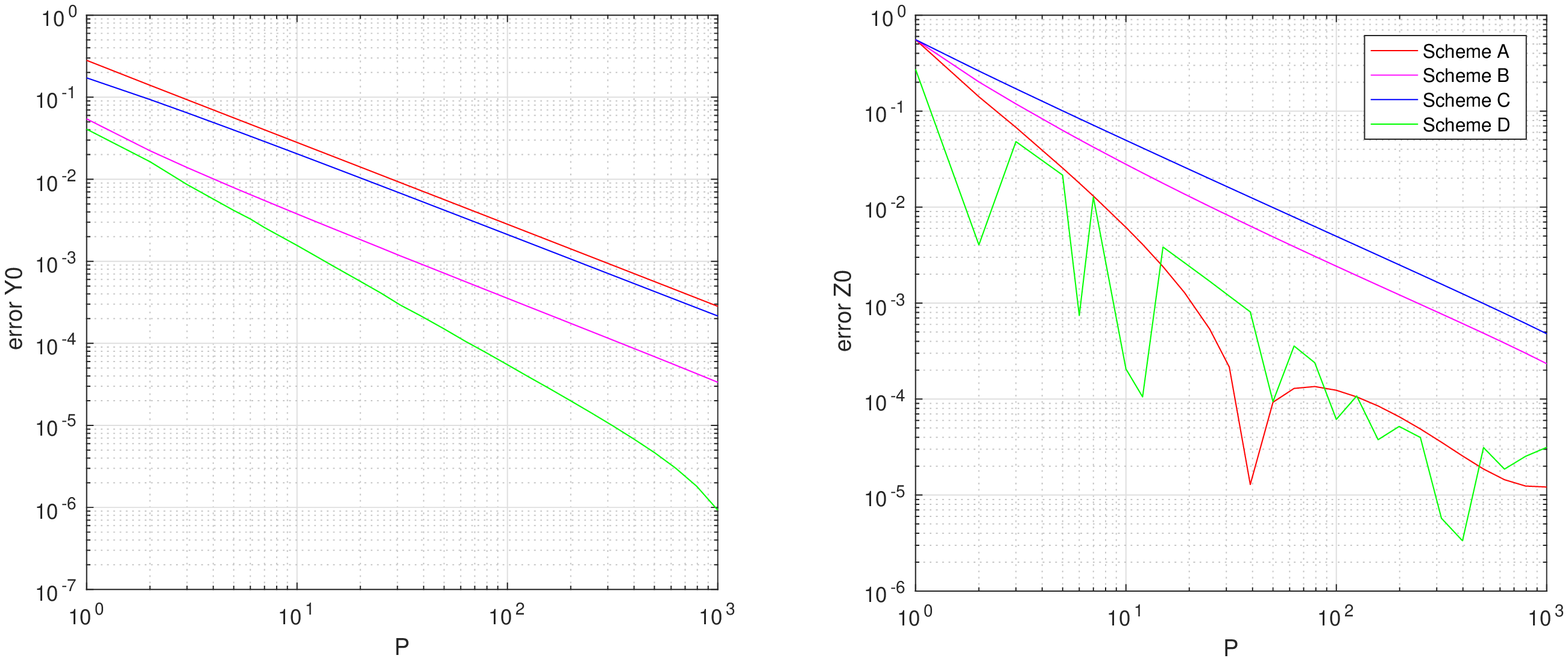}
    \caption{Mixed SWIFT with $J=2^9$}
    \label{mixed_quick_SWIFT_example3a}
\end{subfigure}
\caption{Results example 3, left: error in $y(0,x_0)$, right: error in $z(0,x_0)$}
\end{figure}

\begin{figure}
\centering
\includegraphics[width=\textwidth]{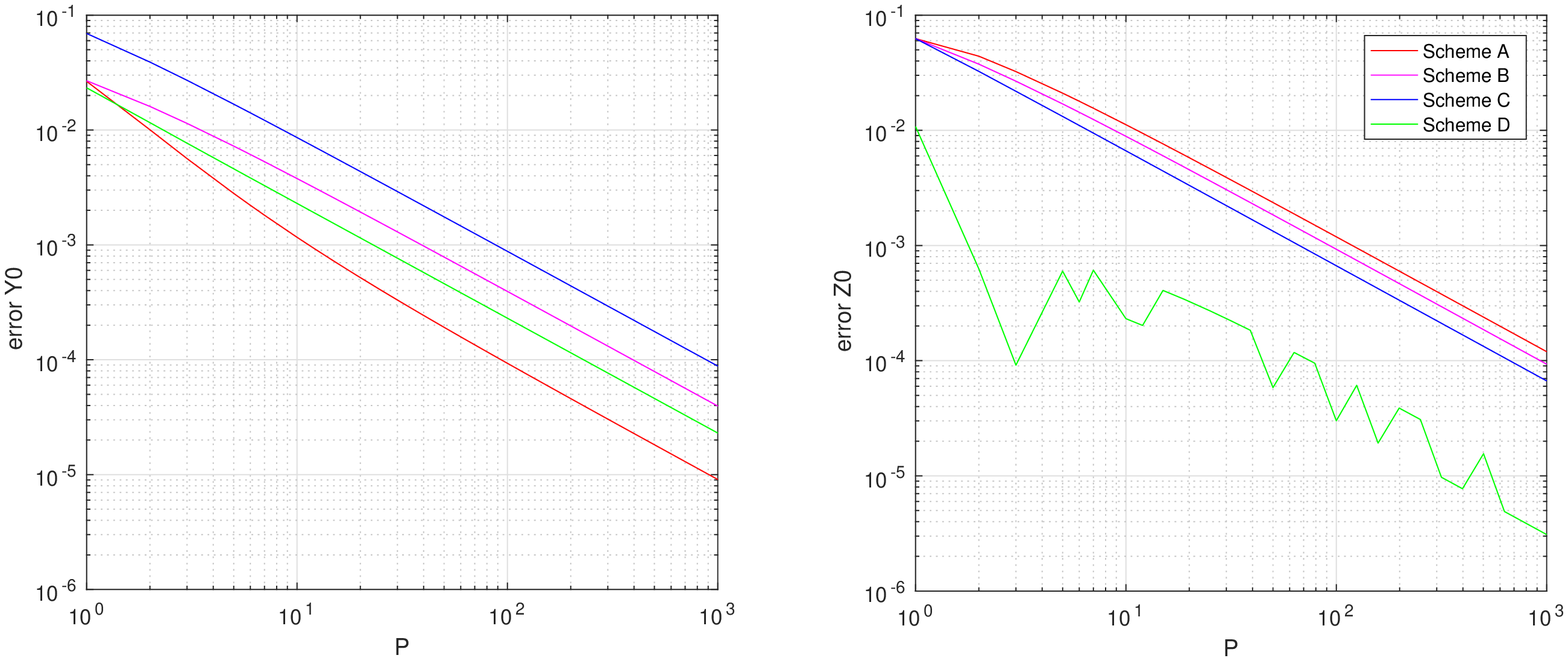}
\caption{Results example 4, ($J=2^9$), left: error in $y(0,x_0)$, right: error in $z(0,x_0)$}
\label{quick_SWIFT_example4}
\end{figure}

\end{document}